\newtheorem{thm}{Theorem}
\newtheorem{lem}{Lemma}
\newtheorem{cor}{Corollary}
\newtheorem{prop}{Proposition}
\newtheorem{defn}{Definition}
\renewcommand\det{\operatorname{Det}}
\newcommand\dist{\operatorname{Dist}}
\newcommand\px{\operatorname{PX}}
\newcommand\aut{\operatorname{Aut}}
\newcommand\inv{^{-1}}   
\newcommand{\Z}{\mathbb Z}
\newcommand{\F}{\mathcal F}
\title{Symmetry Parameters of Praeger-Xu Graphs}
\author{Sally Cockburn \and Max Klivans {\thanks{This research was supported by The Monica Odening '05 Student Internship and Research Fund in Mathematics.}}}
\date{\today}
\begin{document}

\maketitle

\begin{abstract}
    Praeger-Xu graphs are connected, symmetric, 4-regular graphs that are unusual both in that their automorphism groups are large, and in that vertex stabilizer subgroups are also large. Determining number and distinguishing number are parameters that measure the symmetry of a graph by investigating additional  conditions that can be imposed on a graph to eliminate its nontrivial automorphisms. In this paper, we compute the values of these parameters for Praeger-Xu graphs. Most Praeger-Xu graphs are 2-distinguishable; for these graphs we also proved the cost of 2-distinguishing.
\end{abstract}

{\bf Keywords}: Praeger-Xu graphs; determining number; distinguishing number; cost of 2-distinguishing.

{\bf Subject Classification:} 05C15, 05C25, 05C69

\section{Introduction}

A finite simple graph $G = (V, E)$ consists of a finite, nonempty set $V$ of  vertices, and a set of $2$-subsets of $V$, called edges. An appealing feature of graphs is that they can be represented geometrically, with dots corresponding to vertices and lines between dots corresponding to edges. Certain graphs have the property that when the positions of the dots are carefully chosen, this geometric representation displays visual symmetry. However, a graph is an set-theoretic object, not a drawing. There are  various ways to give a rigorous mathematical characterization of symmetry.

 An {\it automorphism} $\alpha$ of a graph $G = (V,E)$ is a permutation of $V$ such that for all $u, v \in V$, $\{u, v\} \in E$ if and only if $\{\alpha(u), \alpha(v)\} \in E$. The set of automorphisms of $G$, denoted $\aut(G)$, is a group under composition. For example, the automorphism group of the complete graph on $n$ vertices is the entire group of permutations on $n$ elements; that is, $\aut(K_n) = S_n$. For $n\ge 3$, the automorphism group of the cycle $C_n$ is the dihedral group $D_n$, consisting of rotations and reflections. 

 One way of characterizing the symmetry of a graph is to determine whether the vertices and/or edges play the same role, in the following sense. A graph $G$ is {\it vertex-transitive} if for all $u, v \in V$, there is $\alpha \in \aut(G)$ such that $\alpha(v) = u$. Similarly, $G$ is {\it edge-transitive} if for all $\{u, v\}, \{x, y\} \in E$, there is $\alpha \in \aut(G)$ such that $\alpha(\{u, v\}) = \{\alpha(u), \alpha(v)\} = \{x, y\}$. More stringently, a graph is {\it arc-transitive} if for all $\{u, v\}, \{x, y\} \in E$, there is $\alpha \in \aut(G)$ such that $\alpha(u) = x$ and $\alpha(v) = y$. Connected, arc-transitive graphs are automatically both vertex-transitive and edge-transitive, and are simply called {\it symmetric} graphs. Complete graphs $K_n$ and cycles $C_n$ are examples of symmetric graphs.

 Another way of characterizing the symmetry of a graph $G$ is to quantify extra measures that can taken to prevent the existence of nontrivial automorphisms of $G$. As one example, we could require that automorphisms  of $G$ fix point-wise a subset $S$ of vertices. If the only automorphism doing so is the identity, then $S$ is called a {\it determining set} of $G$. The {\it determining number} of $G$, denoted $\det(G)$, is the minimum size of a determining set of $G$. (Some authors use the term {\it fixing} instead of determining, for both sets and numbers.) Graphs with no nontrivial automorphisms, sometimes called asymmetric or rigid graphs, have determining number $0$; at the opposite end of the spectrum, $\det(K_n) = n-1$.  A minimum determining set for $C_n$  is any set of two non-antipodal vertices, so $\det(C_n) = 2$.

 As another example, we could paint the vertices with different colors and require that automorphisms preserve set-wise the color classes. Graph $G$  is {\it d-distinguishable} if the vertices can be colored with $d$ colors in such a way that the only automorphism preserving the color classes is the identity. The {\it distinguishing number} of $G$, denoted $\dist(G)$, is the minimum number of colors required for a distinguishing coloring. For a discussion of  elementary properties of determining numbers and distinguishing numbers, as well as the connections between them, see~\cite{AlBo2007}. 
 
 Remarkably, many infinite families of symmetric graphs have been found to have distinguishing number $2$, including hypercubes ~\cite{BoCo2004},
 Cartesian powers $G^{\Box n}$ of a connected graph where $G\ne K_2,K_3$ and $n\geq 2$~\cite{Al2005, ImKl2006,KlZh2007} and Kneser graphs $K_{n:k}$ with $n\geq 6, k\geq 2$~\cite{AlBo2007}. 
 Boutin~\cite{B2008} introduced an additional invariant in such cases; the {\it cost of 2-distinguishing} $G$, denoted $\rho(G)$, is the minimum size of a color class in a 2-distinguishing coloring of $G$.  

In this paper, we find 
these symmetry parameters for a family of symmetric graphs called Praeger-Xu graphs. 
They are remarkable among all connected, symmetric, 4-regular graphs for having very large automorphism groups. Moroever, there is an infinite family of Praeger-Xu graphs with the property that the smallest subgroup of automorphisms that acts transitively on the vertices has an arbitrarily large vertex stabilizer. For these and more results on Praeger-Xu graphs, see \cite{OTCOPXG}, \cite{PSV2010} and \cite{JPW2019}. The large automorphism group suggest that they might have large determining and distinguishing numbers;  the large vertex stabilizers suggest the opposite.

This paper is organized as follows. In Section~\ref{sec:PXIntro}, we provide a definition of the Praeger-Xu graphs and facts about their automorphism groups. In Section~\ref{sec:PXn1}, we show that most Praeger-Xu graphs are twin-free; for those with twins, we use a quotient graph construction to find the determining and distinguishing number. In Section~\ref{sec:DetNoTwins}, we find the determining number for twin-free Praeger-Xu graphs. As a tool for computing distinguishing number, in Section~\ref{sec:Interchangeable} we characterize pairs of vertices in twin-free Praeger-Xu graphs that are interchangeable via an automorphism. Finally, in Section~\ref{sec:DistNoTwins} we show that all twin-free Praeger-Xu graphs are $2$-distinguishable and compute the  cost of $2$-distinguishing. Our results are summarized in Table~\ref{tab:summary}.

\begin{table}[]
    \centering
    \begin{tabular}{|c|c|c|c|c|} \hline
          Parameter & Value &  Condition(s) \\ \hline
      $\det(\px(n,k))$ & 6 & $(n,k) = (4,1)$ \\ \cline{2-3}
      & $\lceil\frac{n}{k}\rceil$ & $k \neq \frac{n}{2}$ but $(n,k) \neq (4,1)$ \\ \cline{2-3}
      & $\lceil\frac{n}{k}\rceil+1 = 3$ & $k = \frac{n}{2}$\\
      \hline
      $\dist(\px(n,k))$ & 5 & $(n,k) = (4,1)$ \\ \cline{2-3}
      & $3$ & $n \neq 4$, $k=1$\\ \cline{2-3}
      & $2$ & $k \ge 2$\\
      \hline
      $\rho(\px(n,k))$ & $5$ & $(n,k) = (4,2)$  \\ \cline{2-3}
       $(k \ge 2)$& $\lceil\frac{n}{k}\rceil$ & $5 \le n < 2k$, or\\ 
       & &  $2k<n \text{ and }  n \notin \{0 \bmod k, -1 \bmod k\}$ \\\cline{2-3}
        & $\lceil\frac{n}{k}\rceil + 1$ &  
        otherwise\\
      \hline     
    \end{tabular}
    \caption{Summary of Symmetry Parameters ($n \ge 3$)}
    \label{tab:summary}
\end{table}

\section{Praeger-Xu Graphs, $\px(n,k)$}\label{sec:PXIntro}

In  1989, Praeger and Xu \cite{SGOTPV} introduced a family of connected graphs they denoted by $C(m,r,s)$, where $m\ge 2$, $r \ge 3$ and $s \ge 1$, that are vertex-transitive for $r \ge s$ and arc-transitive,  hence symmetric, for $r \ge s+1$. This was part of an investigation into connected, symmetric graphs whose automorphism groups have the property that for any vertex $v$, the subgroup of automorphisms fixing $v$ (the stabilizer of $v$) does not act primitively on the set of neighbors of $v$. 
The Praeger-Xu graphs are those where $p=2$; the notation $PX(n,k)$ denotes $C(2, n, k)$.
There are several ways of describing Praeger-Xu graphs (see \cite{GP1994}); we use what is called the bitstring model. 

\begin{defn}\label{defn:pxnk}
Let $n\geq3$ and $1\leq k<n$. The corresponding 
the Praeger-Xu graph is $\px(n,k)=(V,E)$, where $V$ is the set of all ordered pairs $(i,x)$, where $i\in\Z_n$ and $x= x_0x_1 \cdots x_{k-1} $ is a bitstring of length $k$,
and $\{(i,x),(j,y)\}\in E$ if and only if $j=i+1$ and $x=az_1z_2\cdots z_{k-1}$ and $y=z_1z_2\cdots z_{k-1}b$ for some $z_1,\dots,z_{k-1},a,b\in \Z_2$.
\end{defn}

Throughout this paper, subscripts on bits will be considered elements of $\mathbb Z_k$. 
We say that the bit $x_j$ in $x$ is {\it flipped} if it is switched to $x_j+1$ in $\mathbb Z_2$.

There is a natural partition of $V$ into \emph{fibres}  $\F_i=\{(i,x): x\in \mathbb Z_2^k\}$ for  each $i\in\Z_n$. 
Each fibre is an independent set of $2^k$ vertices; every vertex in $\F_i$ is adjacent to exactly two vertices in each of $\F_{i+1}$ and $\F_{i-1}$, so $\px(n,k)$ is $4$-regular, or tetravalent. Two fibres $\F_i$ and $\F_j$ are {\it antipodal} if and only if $n$ is even and $i-j = \frac{n}{2} \bmod n$.

Two Praeger-Xu graphs are illustrated in Figure~\ref{fig:EZsamples}. Figure~\ref{fig:px32} shows the smallest Praeger-Xu graph having  $k>1$, namely $\px(3,2)$, of order $3 \cdot 2^2 = 12$. Figure~\ref{fig:px205} shows the larger Praeger-Xu graph $\px(20,5)$ of order $20\cdot 2^5 = 640$. In all our diagrams of Praeger-Xu graphs, $\F_0$ is the fibre in the $12$ o'clock position, with remaining fibres labeled consecutively clockwise. The vertices in $\F_0$ on $\px(3,2)$ have been labeled with their bitstring components; the bitstring components of vertices in $\F_1$ and $\F_2$ follow the same pattern. More generally, the bitstring components are the binary representations of the integers $0$ to $2^k$, starting with the innermost vertex. 
Throughout this paper, we will be assuming that $n \ge 3$ and $1 \le k < n$, unless otherwise explicitly indicated. 

\begin{figure}[h]
\begin{subfigure}{0.5\textwidth}
\includegraphics[scale=0.28, center]{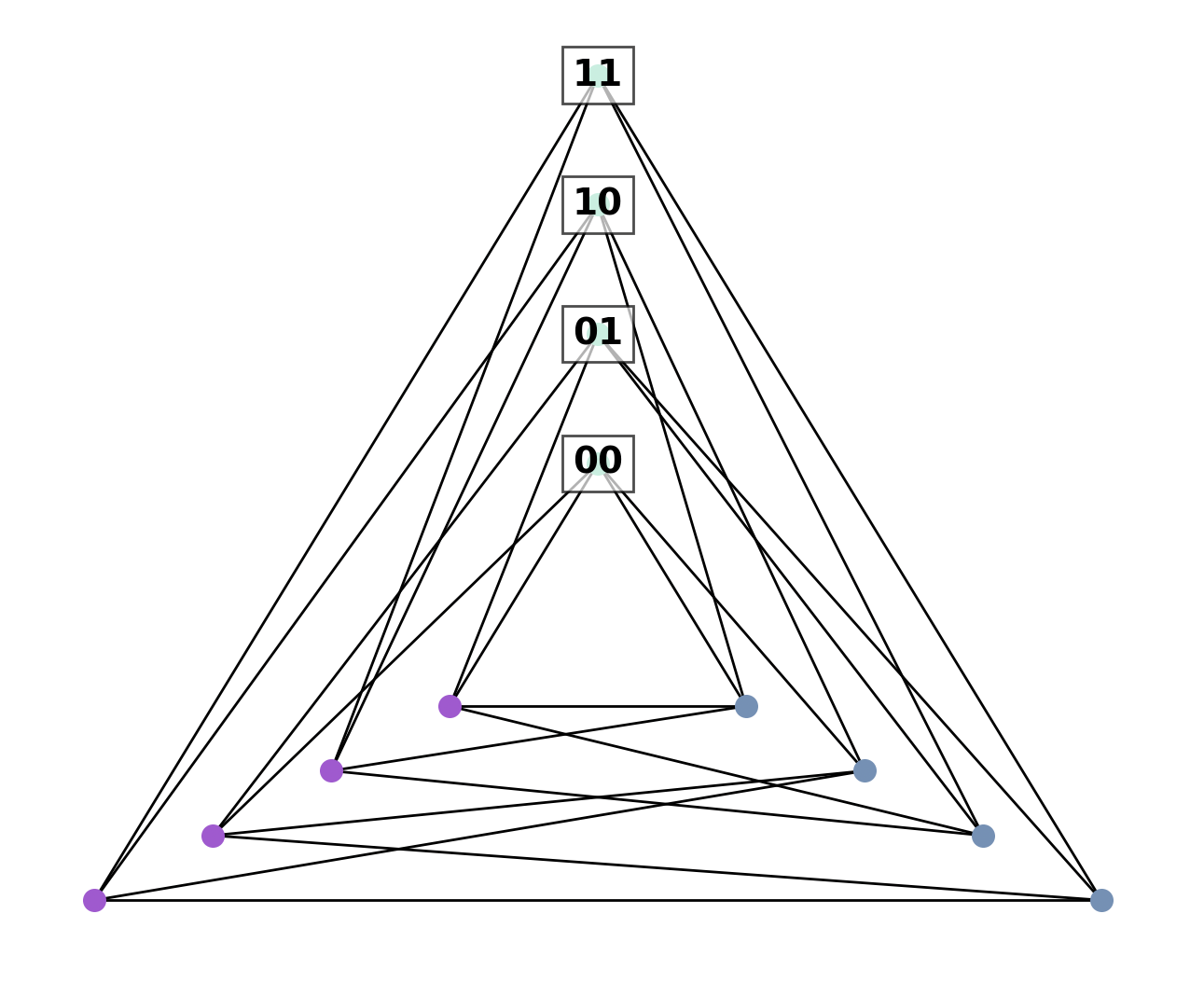}
\caption{$\px(3,2)$.}
\label{fig:px32}
\end{subfigure}
\begin{subfigure}{0.5\textwidth}
\includegraphics[scale=0.16, center]{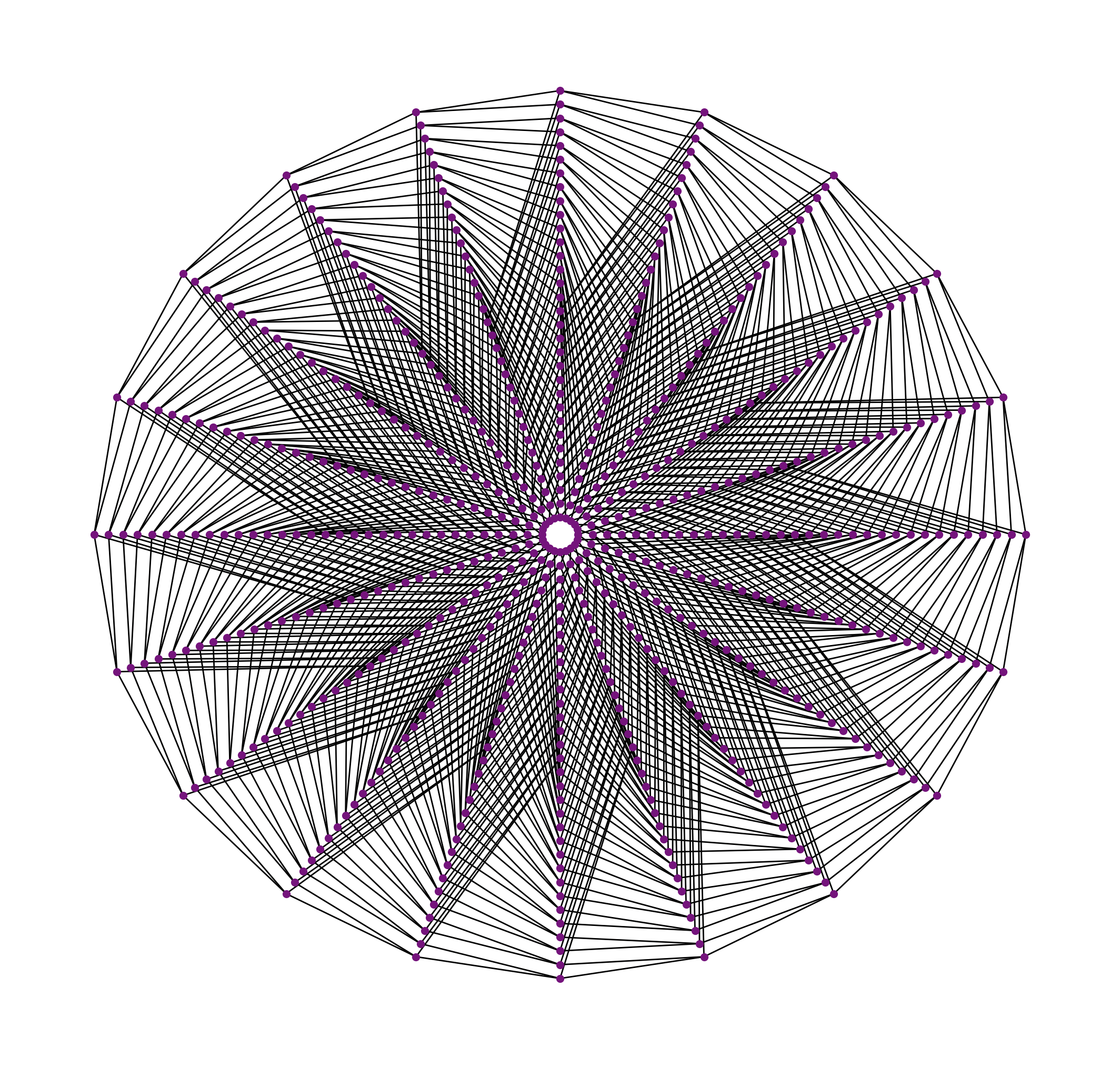}
\caption{$\px(20,5)$.}
\label{fig:px205}
\end{subfigure}
\caption{Two Praeger-Xu graphs.}
\label{fig:EZsamples}
\end{figure}

\subsection{Automorphisms of $\px(n,k)$}

In \cite{SGOTPV}, Praeger and Xu described the automorphism groups of all graphs in the family $C(p, r, s)$. We will adopt the notation used in \cite{OTCOPXG} for automorphisms of the Praeger-Xu graphs, $PX(n,k)$. The automorphism group is generated by three different types of automorphisms. 

 The first is the rotation $\rho$, defined by 
$\rho\cdot(i,x)=(i+1,x).$
Composing $\rho$ with itself $s$ times 
corresponds to a rotation by $s$ fibres: $\rho^s\cdot(i,x)=(i+s,x)$. If $s$ is a multiple of $n$, then the resulting map is the identity, and so we can interpret $s$ as an element of $\mathbb Z_n$.

The second automorphism is the reflection defined by $\mu\cdot(i,x) = (-i, x^-),$ where $x^- = (x_0x_1 \cdots x_{k-1})^- = x_{k-1} \cdots x_1x_0$.  
It is easily verified that $\mu^2 = $ id and $\mu \rho \mu = \rho^{-1}$, so the subgroup $\langle \rho, \mu\rangle$ of $\aut(\px(n,k))$ is the dihedral group $D_n$.
 
 Following \cite{OTCOPXG}, 
for each $s\in\Z_n$ we let $\mu_s = \rho^{s+1-k} \mu \in \langle \rho, \mu\rangle$, so that
$\mu_s\cdot(i,x)=(s+1-k-i,x^-).$
With this notation, $\mu=\mu_{k-1}$; in particular, note that  $\rho^0=\text{id}$ but $\mu_0\neq\text{id}$. We collect some elementary facts about the reflections $\mu_s$ in the following lemma.

 \begin{lem}\label{lem:ijReflec}
Let $s,i,j\in\Z_n$. 
\begin{enumerate}[(1)]
\item The reflection $\mu_s$ interchanges fibres  $\F_i$ and $\F_{s+1-k-i}$; equivalently, fibres $\F_i$ and $\F_j$ are interchanged by $\mu_{i+j+k-1}$.
\item If $n$ is odd, then each $\mu_s$ preserves exactly one fibre. If $n$ is even and $s= k \bmod 2$, then $\mu_s$ does not preserves any fibre, and if $s \neq k \bmod 2$, then $\mu_s$ preserves exactly two antipodal fibres. 
\end{enumerate}
\end{lem}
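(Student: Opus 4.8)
The plan is to prove both parts by direct computation from the explicit formula $\mu_s\cdot(i,x)=(s+1-k-i, x^-)$, reducing each claim to an arithmetic statement in $\Z_n$ about when $i$ is a fixed point of the map $i \mapsto s+1-k-i$.

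For part (1), I would simply read off the fibre action: since $\mu_s$ sends $(i,x)$ to $(s+1-k-i, x^-)$, it maps $\F_i$ into $\F_{s+1-k-i}$, and because $s+1-k-(s+1-k-i)=i$ the map is an involution on fibre indices, so $\F_i$ and $\F_{s+1-k-i}$ are genuinely interchanged. For the equivalent phrasing, I set $j=s+1-k-i$ and solve for $s$, obtaining $s=i+j+k-1$, which is exactly the claimed value; this is a one-line substitution.

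For part (2), the fibre $\F_i$ is preserved by $\mu_s$ precisely when $i \equiv s+1-k-i \pmod n$, i.e.\ when $2i \equiv s+1-k \pmod n$. So the whole statement reduces to counting solutions $i\in\Z_n$ of the linear congruence $2i \equiv s+1-k \pmod n$. When $n$ is odd, $2$ is a unit in $\Z_n$, so there is exactly one solution regardless of $s$, giving exactly one preserved fibre. When $n$ is even, the congruence $2i \equiv c \pmod n$ (with $c=s+1-k$) has a solution if and only if $c$ is even, and in that case it has exactly two solutions differing by $n/2$ — hence two \emph{antipodal} preserved fibres. I would then translate the parity condition on $c=s+1-k$ into the stated condition: $c$ is even iff $s+1-k\equiv 0\pmod 2$ iff $s\equiv k-1\equiv k+1\pmod 2$, i.e.\ $s\neq k\bmod 2$; conversely $c$ odd (no solutions, no preserved fibre) corresponds to $s\equiv k\bmod 2$. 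This matches the lemma exactly.

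The only place demanding care — not so much an obstacle as a point to get right — is the bookkeeping between the parity of $s+1-k$ and the condition stated in terms of ``$s=k\bmod 2$'' versus ``$s\neq k\bmod 2$,'' since an off-by-one in the $+1-k$ shift would swap the two cases. I would verify the correspondence explicitly with the parity computation above, and as a sanity check confirm that the two solutions in the even case are indeed antipodal by noting they differ by $n/2$, which is consistent with the definition of antipodal fibres given earlier.
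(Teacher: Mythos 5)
Your proposal is correct and complete: the paper itself omits the proof (``straightforward and left to the reader''), and your direct computation---reading the fibre action off the formula $\mu_s\cdot(i,x)=(s+1-k-i,x^-)$ and reducing part (2) to counting solutions of $2i \equiv s+1-k \pmod n$---is exactly the argument the authors intend. The parity bookkeeping and the observation that the two solutions in the even case differ by $n/2$ (hence give antipodal fibres, matching the paper's definition) are handled correctly, including the point that parity of an element of $\Z_n$ is well-defined when $n$ is even.
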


 \begin{proof}
The proof is straightforward and left to the reader.
 \end{proof}

 The third type of automorphism is, for each $s\in \Z_n$, defined by 
$$\tau_s\cdot(i,x)=\begin{cases} 
(i,x^{s-i}), \quad &
\text{if } i\in \{s, s-1, s-2, \dots, s-k+1\},\\ (i,x), & \text{otherwise,}\end{cases}
$$
where $x^j$ denotes the bitstring $x$ with bit $x_j$ flipped. Thus $\tau_s$ flips bit $x_{s-i}$ of the bitstring component of every vertex in $\F_i$ if $i\leq s\leq i+k-1$, and acts trivially on $\F_i$ otherwise.
Equivalently, vertices in $\F_i$ have their bitstring components altered only by $\tau_i, \tau_{i+1}, \dots, \tau_{i+k-1}$. 
Clearly each $\tau_s$ has order $2$ and  $\tau_s,\tau_t$ commute for all $s, t \in \mathbb Z_n$. Hence the subgroup of $\aut(\px(n,k))$ generated by these automorphisms satisfies  
$K=\langle \tau_0,\tau_1,\tau_2,\dots,\tau_{n-1}\rangle\simeq \Z_2^n$. Each $\tau\in K$ can be represented by 
$$\tau=\tau_0^{u_0}\tau_1^{u_1}\tau_2^{u_2}\cdots\tau_{n-1}^{u_{n-1}},$$ 
where $u_m\in\{0,1\}$ for each $m\in\Z_n$. It is easy to verify that $\rho^{-1}\tau_s \rho = \tau_{s+1}$ and $\mu \tau_s \mu = \tau_{k-1-s}$, so $K$ is a normal subgroup of the group generated by $\rho, \mu$ and $\tau_0, \dots \tau_{n-1}$.

Let $\mathcal{A}= K \rtimes \langle\rho,\mu\rangle = K \rtimes D_n$. Then if $\alpha\in\mathcal{A}$, $\alpha=\tau\delta$ for some $\tau\in K$ and $\delta\in\langle\rho,\mu\rangle = D_n$.
Praeger and Xu showed in  \cite{SGOTPV} that for all $n\neq 4$, $\mathcal{A}=\aut(\px(n,k))$, while for $n=4$, $\mathcal{A}$ is a proper subgroup of $\aut(\px(4,k))$. 

Note that under any $\alpha \in \mathcal A$, vertices in the same fibre  will  be mapped to vertices in the same fibre. In other words, the fibres form a block system for the action of $\mathcal A$ on $\px(n,k)$. 
From \cite{OTCOPXG}, the induced action of $\alpha = \tau\delta\in \mathcal A$ on the fibres of $\px(n,k)$ is $\alpha(\F_i) = \F_{\delta(i)}$, where $\delta(i)$ is simply the action of the dihedral group element  $\delta \in  D_n$ on $i \in V(C_n) = \mathbb Z_n$.
Since any $\tau=\tau_0^{u_0}\tau_1^{u_1}\tau_2^{u_2}\cdots\tau_{n-1}^{u_{n-1}} \in K$ preserves fibres, for any 
$\alpha=\tau\delta\in\mathcal{A}$ and $(i,x)\in V$, we have
$$\alpha\cdot (i,x)=\tau\cdot(\delta\cdot (i,x))=\tau\cdot(\delta(i),y)=(\delta(i),z),$$ 
where $y = x$ if $\delta$ is a rotation $\rho^s$, $y = x^-$ if $\delta$ is a reflection $u_s$, and for all $j \in \mathbb Z_k$,
$z_j = y_j + 1$ if $ u_{\delta(i) -j}=1$ and $z_j =
y_j$ otherwise.

\section{Determining and Distinguishing $\px(n,1)$}\label{sec:PXn1}

For any vertex $v$ in a graph $G=(V,E)$, the (open) {\it  neighborhood} of $v$ is $N(v) = \{u : \{u,v\} \in E\}$. Distinct vertices $x$ and $y$ are (nonadjacent) {\it twins} if and only if  $N(x)=N(y)$. Twins are relevant to notions of graph symmetry because if $x$ and $y$ are twins, then the map that interchanges $x$ and $y$ and fixes all other vertices is a graph automorphism. 

\begin{thm}\label{thm:NoTwin}
For $k=1$, two distinct vertices in $\px(n,1)$ are twins if and only if either they are in the same fibre, or $n=4$ and they are in antipodal fibres.
For $k\geq2$, $\text{PX}(n,k)$ is twin-free.
\end{thm}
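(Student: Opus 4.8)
The plan is to work straight from the edge definition to get an explicit description of each neighborhood, and then compare neighborhoods fibre-by-fibre. From Definition~\ref{defn:pxnk}, the vertex $(i,x)$ with $x=x_0x_1\cdots x_{k-1}$ has exactly two neighbors in $\F_{i+1}$, namely $(i+1,x_1\cdots x_{k-1}b)$ for $b\in\Z_2$, and exactly two neighbors in $\F_{i-1}$, namely $(i-1,ax_0\cdots x_{k-2})$ for $a\in\Z_2$. In words, a forward neighbor is obtained by deleting the first bit of $x$ and appending an arbitrary last bit, and a backward neighbor by deleting the last bit and prepending an arbitrary first bit. This is the only structural input I need.

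First I would localize twins to the correct pairs of fibres. Since $n\ge3$, the fibres $\F_{i-1}$ and $\F_{i+1}$ are distinct, so $N((i,x))$ consists of four distinct vertices, split $2$--$2$ between these two fibres. If $(i,x)$ and $(j,y)$ are twins with $i\ne j$, matching up the fibres that carry these four common neighbors forces $\{i-1,i+1\}=\{j-1,j+1\}$ in $\Z_n$. The only solutions are $i=j$ (excluded) or $i-1=j+1$ and $i+1=j-1$; the latter gives $4\equiv0\pmod n$, hence $n=4$ and $j=i+2$, i.e.\ $\F_i$ and $\F_j$ antipodal. Thus every pair of twins lies either in a common fibre or, only when $n=4$, in antipodal fibres.

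It remains to analyze these two cases at the level of bitstrings. For twins $(i,x),(i,y)$ in a common fibre, equality of the forward-neighbor sets $\{x_1\cdots x_{k-1}0,\,x_1\cdots x_{k-1}1\}$ and $\{y_1\cdots y_{k-1}0,\,y_1\cdots y_{k-1}1\}$ forces the common prefixes to agree, $x_1\cdots x_{k-1}=y_1\cdots y_{k-1}$, and equality of the backward sets forces $x_0\cdots x_{k-2}=y_0\cdots y_{k-2}$. When $k=1$ both conditions are vacuous: every vertex of $\F_i$ has all of $\F_{i+1}$ and all of $\F_{i-1}$ as its neighbors, so any two vertices of the same fibre are twins, and the same degeneracy shows that in $\px(4,1)$ two vertices of antipodal fibres share the neighborhood $\F_{i+1}\cup\F_{i+3}$ and are twins. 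When $k\ge2$ the two conditions together pin down every coordinate, giving $x=y$; hence no two distinct vertices of a common fibre are twins.

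The remaining case is the antipodal one with $n=4$ and $k\ge2$, which I expect to be the crux. Here $u=(i,x)$ and $v=(i+2,y)$ both place neighbors in $\F_{i+1}$ and in $\F_{i+3}=\F_{i-1}$, so being twins would require, in $\F_{i+1}$, the set equality $\{x_1\cdots x_{k-1}0,\,x_1\cdots x_{k-1}1\}=\{0\,y_0\cdots y_{k-2},\,1\,y_0\cdots y_{k-2}\}$. The two left-hand strings have distinct last bits $0$ and $1$, whereas the two right-hand strings share the common last bit $y_{k-2}$ (this is exactly where $k\ge2$ is used, to make that last coordinate well defined), so no such equality can hold and $u,v$ are not twins. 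The main obstacle is precisely this comparison of two-element bitstring sets: the forward set varies in its last bit while the backward set varies in its first bit, and one must track which coordinate is forced in order to see that the two sets coincide only in the degenerate length-$1$ case.
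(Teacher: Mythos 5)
Your proposal is correct and follows essentially the same route as the paper: explicit neighborhood computation from the edge definition, matching fibre indices to force $\{i-1,i+1\}=\{j-1,j+1\}$ (hence same fibre, or $n=4$ with antipodal fibres), prefix/suffix comparison to rule out same-fibre twins when $k\ge2$, and the last-bit clash between the forward-neighbor set and the backward-neighbor set to rule out the antipodal case when $k\ge2$. The only difference is organizational (you case-split on fibre relationship first rather than on $k$), not mathematical.
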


\begin{proof}
First assume $k=1$. For any $i\in\Z_n$, 
$\F_i=\{(i,0),(i,1)\}$. By definition, 
\[N((i,0))=\{(i+1,0),(i+1,1),(i-1,0),(i-1,1)\} = \F_{i+1} \cup \F_{i-1} = N((i,1)).\]
Hence $(i,0)$ and $(i,1)$ are twins. More generally,  $(i,x)$ and $(j,y)$ are twins if and only if $\F_{i+1} \cup \F_{i-1} = \F_{j+1} \cup \F_{j-1}$, or equivalently, as subsets of $\mathbb Z_n$, $\{i+1, i-1\} = \{j+1, j-1\}$. If $i \neq j$, then  $i+1 = j-1$ and $i-1 = j+1$. This implies $i-j = 2 = -2$in $\mathbb Z_n$. Since $n \ge 3$, we can conclude $n=4$ and $\F_i$ and $\F_j$ are antipodal fibres.

Next assume $k\ge 2$.
Let $u$ and $v$ be distinct vertices in $\px(n,k)$ such that $N(u)=N(v)$. 
Let $u=(i,axb)$ and $v=(j,cyd)$ for some  $i, j \in \mathbb Z_n$, $a, b, c, d \in \{0, 1\}$, and $y, x \in \mathbb Z_2^{k-2}$ (where $y$ and $x$ are empty strings if $k=2$).
By definition, $N(u)=\{(i+1,xb0),(i+1,xb1),(i-1,0ax),(i-1,1ax)\}$
and 
$N(v)=\{(j+1,yd0),(j+1,yd1),(j-1,0cy),(j-1,1cy)\}.$
Since $N(u)$ consists of two vertices in each of $\F_{i+1}$ and $\F_{i-1}$, and $N(v)$ consists of two vertices in each of $\F_{j+1}$ and $\F_{j-1}$,  $\{i+1, i-1\} = \{j+1, j-1\}$. 

Suppose  $i= j \bmod n$. 
Comparing neighbors in $\F_{i+1} = \F_{j+1}$ with the same final bit gives $xb0=yd0$ and $xb1=yd1$. Hence $xb=yd$ in $\mathbb Z_2^{k-1}$.
An analogous argument can be used in $\F_{i-1}$ to show that $ax=cy$ in $\mathbb Z_2^{k-1}$. Thus $axb=cyd$ in $\mathbb Z_2^k$. Since $i= j$ in $\mathbb Z_n$, $(i,axb)=(j,cyd)$ and so $u=v$, contradicting the assumption that $u$ and $v$ are distinct.

Alternatively, if $i\neq j \bmod n$, then as argued earlier in this proof, $n=4$ and $i-1 =j+1 \bmod n$. 
Hence $N(u)\cap\F_{i-1}= N(v) \cap \F_{j+1}$, so
\[
\{(i-1,0ax),(i-1,1ax)\} = \{(j+1, yd0), (j+1, yd1)\}.
\]
Since $x$ and $y$ cannot be simultaneously both $0$ and $1$, this is impossible.
\end{proof}

Figure~\ref{fig:EZsamples} depicts two Praeger-Xu graph with twins.  Note that every vertex of $\px(4,1)$ is in a set of $t=4$ mutual twins, while for any $n\ge 3, n\neq 4$, every vertex of $\px(n,1)$ is in a set of $t=2$ mutual twins.

\begin{figure}[h]
\begin{subfigure}{0.5\textwidth}
\includegraphics[scale=0.25, center]{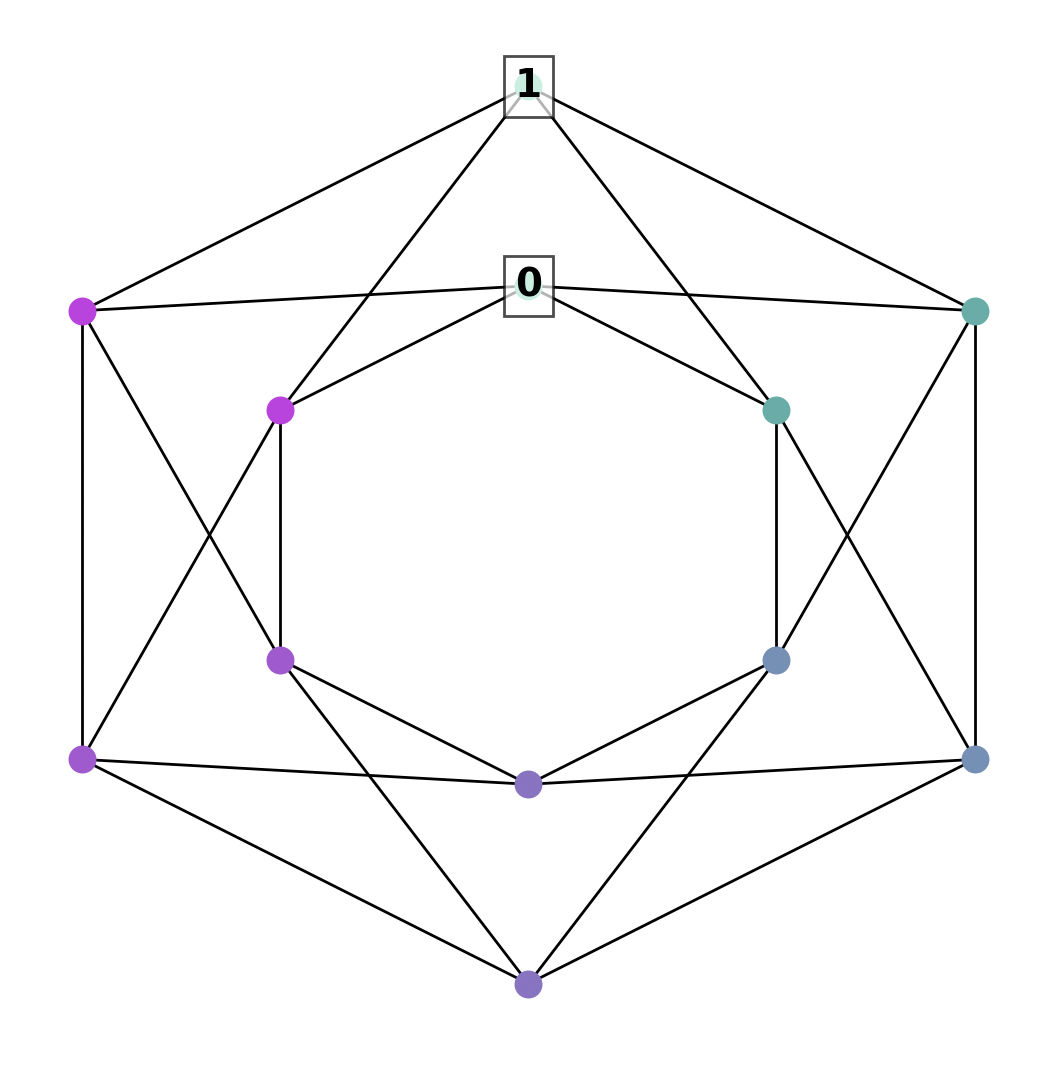}
\caption{$\px(6,1)$ }
\label{fig:px61}
\end{subfigure}
\begin{subfigure}{0.5\textwidth}
\includegraphics[scale=0.2, center]{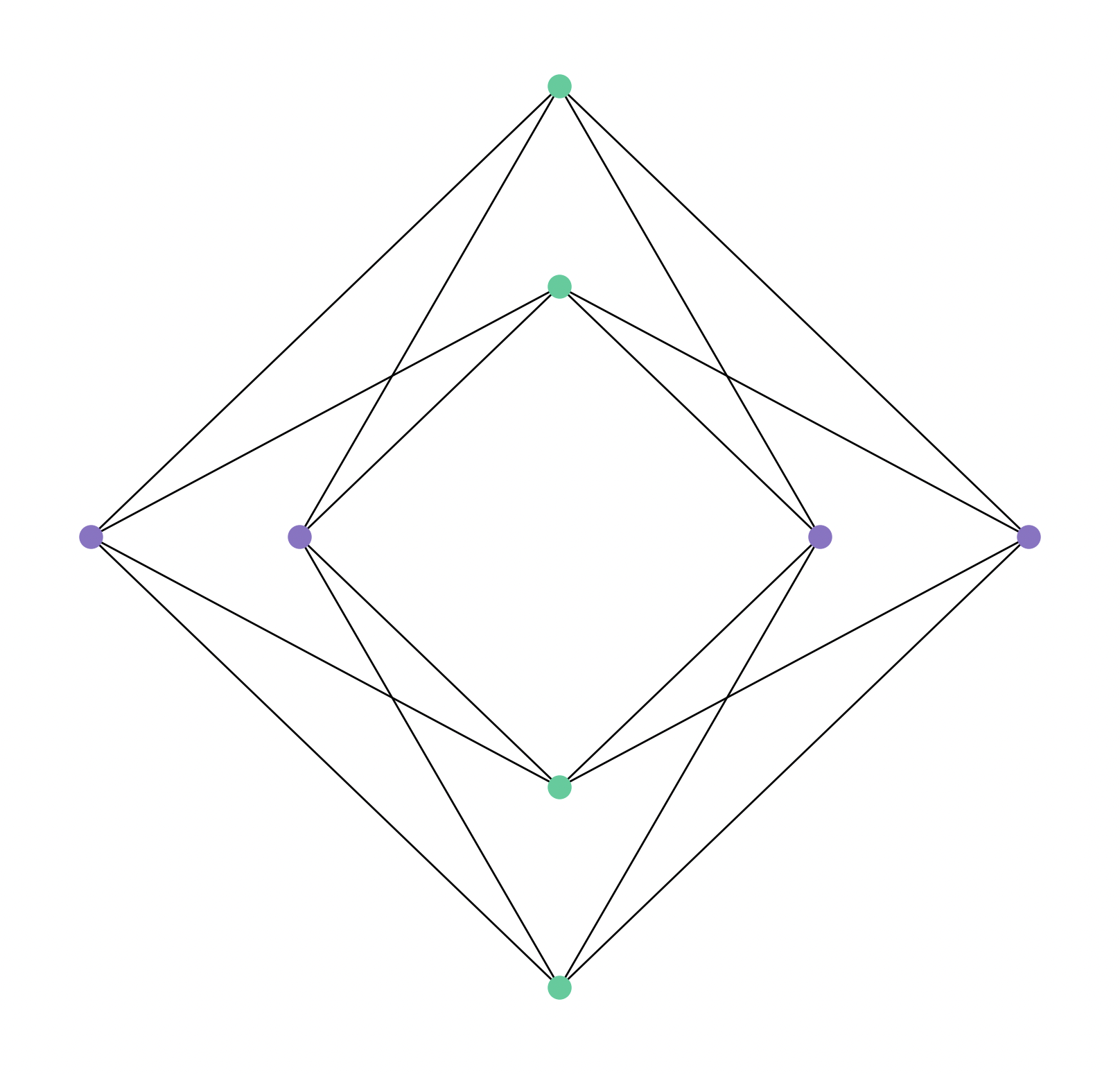}
\caption{$\px(4,1)$.}
\label{fig:px41}
\end{subfigure}
\caption{Two Praeger-Xu graphs with $k=1$.}
\label{fig:EZsamples}
\end{figure}

For any graph $G= (V,E)$, can define an equivalence  relation on $V$ by $x\sim y$ if and only if $x$ and $y$ are twins. 
The corresponding \emph{twin quotient graph} $\widetilde{G}$ has as its vertex set the set of equivalence classes $[x]=\{y\in V(G):x\sim y\}$, with $\{[x],[z]\}\in E(\widetilde{G})$ if and only if there exist $p\in[x]$ and $q\in[z]$ such that $\{p,q\}\in E(G)$. (Note that by definition of the twin relation, $\{[x],[z]\}\in E(\widetilde{G})$ if and only if for all $p\in[x]$ and $q\in[z]$,  $\{p,q\}\in E(G)$.) The symmetry parameters of the twin quotient graph $\widetilde G$ can be used to give information about the symmetry parameters of $G$.

A subset $T\subseteq V(G)$ that contains all but one of the vertices from each equivalence class of twin vertices is called a \emph{minimum twin cover} of $G$.  In 2020, Boutin {\it et al.} proved the following result. 

\begin{thm}\label{thm:DettG} \cite[Theorem 19] {BCKLPR2020b}
 Let $G$ be a graph  with twin quotient graph $\widetilde G$. Let $T$ be a minimum twin cover of $G$ and let 
 $\widetilde T = \{[u] : u \in T\} \subseteq V(\widetilde G)$. Let $\widetilde S$ be a determining set for $\widetilde G$ containing $\widetilde T$. Let $R=\{x\in V(G) : [x]\in \widetilde S\setminus \widetilde T\}$. Then $S=T \cup R$ is a determining set for $G$. Furthermore, if $\widetilde S$ is of minimum size among determining sets for $\widetilde G$ that contain $\widetilde T$, then $S$ is a minimum determining set for $G$.
\end{thm}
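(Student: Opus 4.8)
The plan is to prove both assertions by exploiting the canonical homomorphism $\Phi \colon \aut(G) \to \aut(\widetilde G)$ given by $\Phi(\alpha)([x]) = [\alpha(x)]$, which is well defined because any automorphism maps twins to twins and hence permutes the twin classes. Two structural facts underpin everything, and I would record them at the outset. First, since each twin class is an independent set and any two adjacent classes induce a complete bipartite graph, an automorphism $\widetilde\beta$ of $\widetilde G$ together with an arbitrary choice of bijection $C \to \widetilde\beta(C)$ on each class (necessarily of equal size) glues to an automorphism of $G$ lifting $\widetilde\beta$; thus $\Phi$ is surjective onto the subgroup of size-preserving automorphisms of $\widetilde G$. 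Second, the kernel of $\Phi$ is $\prod_{[x]} \operatorname{Sym}([x])$, so in particular transposing two vertices of a common twin class (fixing all else) is an automorphism of $G$.

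\textbf{$S$ is a determining set.} Suppose $\alpha \in \aut(G)$ fixes $S = T \cup R$ pointwise; I must show $\alpha = \operatorname{id}$. Because $T \subseteq S$ meets every nontrivial twin class and $R \subseteq S$ contains the (unique) representative of every class in $\widetilde S \setminus \widetilde T$, the induced automorphism $\Phi(\alpha)$ fixes $\widetilde T = \{[u] : u \in T\}$ and $\widetilde S \setminus \widetilde T$, hence all of $\widetilde S$, pointwise. Since $\widetilde S$ is a determining set for $\widetilde G$, it follows that $\Phi(\alpha) = \operatorname{id}$, i.e.\ $\alpha$ preserves every twin class setwise. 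Finally, on each nontrivial class $C$ the cover $T$ contains all but one vertex; $\alpha$ fixes those $|C|-1$ vertices and permutes $C$, so it must fix the remaining vertex as well. As singleton classes are fixed trivially, $\alpha = \operatorname{id}$.

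\textbf{Minimality.} Here I would prove a matching lower bound. Let $S'$ be any determining set for $G$. The transposition observation forces $|S' \cap C| \ge |C|-1$ for every twin class $C$, for otherwise two omitted vertices of $C$ could be swapped by a nontrivial automorphism fixing $S'$. Now set $\widetilde{S'} = \widetilde T \cup \{[x] : x \in S',\ |[x]| = 1\}$; by construction $\widetilde T \subseteq \widetilde{S'}$, and counting class-by-class gives $|S'| \ge |T| + (|\widetilde{S'}| - |\widetilde T|)$. The crux is to show $\widetilde{S'}$ is itself a determining set for $\widetilde G$: given $\widetilde\beta \in \aut(\widetilde G)$ fixing $\widetilde{S'}$ pointwise, observe that $\widetilde\beta$ fixes every vertex of $\widetilde G$ of weight $\ge 2$ (these are exactly the elements of $\widetilde T$), hence permutes only weight-one vertices among themselves and is therefore size-preserving. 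By the lifting fact it lifts to some $\beta \in \aut(G)$, and I may choose the lift to be the identity on every nontrivial class and to follow $\widetilde\beta$ on singletons, so that $\beta$ fixes $S'$ pointwise. As $S'$ is determining for $G$, $\beta = \operatorname{id}$, whence $\widetilde\beta = \Phi(\beta) = \operatorname{id}$. Thus $\widetilde{S'}$ is a determining set for $\widetilde G$ containing $\widetilde T$, so $|\widetilde{S'}| \ge |\widetilde S|$ by minimality of $\widetilde S$, giving $|S'| \ge |T| + (|\widetilde S| - |\widetilde T|) = |S|$. Since $S'$ was arbitrary and $S$ attains this size, $S$ is minimum.

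The main obstacle I anticipate is the minimality half, and within it the care needed to see that $\widetilde{S'}$ genuinely determines $\widetilde G$ rather than merely its size-preserving automorphisms. The saving grace is that \emph{containing $\widetilde T$} forces any automorphism of $\widetilde G$ fixing $\widetilde{S'}$ to fix all high-weight vertices, hence to be size-preserving, which is precisely what makes the lift to $\aut(G)$ available. The remaining bookkeeping --- the disjointness of $T$ and $R$ (elements of $R$ lie in singleton classes, which contribute nothing to $T$), and the verification that the displayed inequalities are equalities for $S$ --- is routine.
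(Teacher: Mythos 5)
This theorem is imported by the paper from \cite[Theorem 19]{BCKLPR2020b} and stated without proof, so there is no in-paper argument to compare against; your proposal has to stand on its own, and it does --- both halves are correct. For the first half, the quotient homomorphism $\Phi\colon\aut(G)\to\aut(\widetilde G)$ argument is sound: an automorphism fixing $S=T\cup R$ pointwise induces an automorphism of $\widetilde G$ fixing $\widetilde S$ pointwise (each class in $\widetilde T$ contains a fixed vertex of $T$, each class in $\widetilde S\setminus\widetilde T$ is a fixed singleton of $R$), hence induces the identity on $\widetilde G$, hence preserves every class setwise, and is then the identity because $T$ omits only one vertex per class. For the minimality half, your two structural facts are exactly what is needed and both are verified correctly under the paper's definitions (classes are independent, adjacent classes induce complete bipartite subgraphs, so size-preserving automorphisms of $\widetilde G$ lift, and $\ker\Phi=\prod_C\operatorname{Sym}(C)$). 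The crux --- that $\widetilde{S'}=\widetilde T\cup\{[x]:x\in S',\,|[x]|=1\}$ is a determining set for $\widetilde G$ --- is handled correctly: since $\widetilde T\subseteq\widetilde{S'}$, any $\widetilde\beta$ fixing $\widetilde{S'}$ pointwise fixes all classes of size at least two, is therefore size-preserving, and so lifts to a $\beta\in\aut(G)$ chosen to be the identity on nontrivial classes and forced on singletons; this $\beta$ fixes $S'$ pointwise, so $\beta=\operatorname{id}$ and $\widetilde\beta=\Phi(\beta)=\operatorname{id}$. Combined with $|S'\cap C|\ge|C|-1$ (from the twin transposition) and the count $|S'|\ge|T|+(|\widetilde{S'}|-|\widetilde T|)\ge|T|+(|\widetilde S|-|\widetilde T|)=|S|$, this gives the matching lower bound, and the bookkeeping ($T\cap R=\emptyset$, $|R|=|\widetilde S|-|\widetilde T|$) is as routine as you claim. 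This quotient-homomorphism/lifting strategy is presumably the same in spirit as the cited source's proof, but within the four corners of this paper your argument is a complete, self-contained replacement for the citation.
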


For distinguishing  number, we have the following result from Cockburn and Loeb. 

\begin{thm}\label{thm:DistTwins}\cite[Theorem 2]{CL2023}
Let $G$ be a graph in which every vertex is in a set of $t$ mutual twins.
If $\dist(\widetilde G) = \widetilde d$, then $\dist(G) = d$, where $d$ is the smallest positive integer such that $\binom{d}{t} \ge \widetilde d$.
\end{thm}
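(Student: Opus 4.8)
The plan is to exploit the fact that, when every twin class has the common size $t$, the automorphisms of $G$ are assembled from automorphisms of $\widetilde G$ together with the freedom to permute the $t$ members of each class arbitrarily. First I would record two structural observations. Any $\alpha\in\aut(G)$ preserves the twin relation, since $N(\alpha(u))=\alpha(N(u))$, so it descends to an automorphism $\widetilde\alpha$ of $\widetilde G$ with $\widetilde\alpha([x])=[\alpha(x)]$. Conversely, because adjacency of two classes in $\widetilde G$ forces \emph{all} cross-pairs to be edges in $G$ while each class is an independent set, any automorphism of $\widetilde G$ lifts to $G$ by choosing, for each class, a bijection onto its image.

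The crucial local observation is that in \emph{any} distinguishing coloring of $G$ the $t$ vertices of each class must receive $t$ pairwise distinct colors: if two twins shared a color, the transposition swapping them (an automorphism, since mutual twins are interchangeable) would preserve the coloring. Hence a $d$-coloring in which each class is rainbow-colored assigns to each class a $t$-element subset of the $d$ colors, a \emph{super-color}, and there are exactly $\binom{d}{t}$ of these. Because the $t$ members of a class are interchangeable, the only data a class carries as a vertex of $\widetilde G$ is its super-color, so a rainbow $d$-coloring of $G$ is essentially a coloring of $\widetilde G$ by $\binom{d}{t}$ super-colors.

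For the upper bound $\dist(G)\le d$, assume $\binom{d}{t}\ge\widetilde d$ and fix a distinguishing $\widetilde d$-coloring of $\widetilde G$. Injecting its $\widetilde d$ colors into the $t$-subsets of $\{1,\dots,d\}$, I color each class of $G$ rainbow with the subset assigned to its image in $\widetilde G$. If $\alpha\in\aut(G)$ preserves this coloring, then $\alpha$ maps each class color-preservingly onto its image, so matched classes carry equal super-colors; since the injection is color-faithful, $\widetilde\alpha$ preserves the distinguishing coloring of $\widetilde G$ and is the identity. Thus $\alpha$ fixes each class setwise and, being a color-preserving permutation of distinctly colored vertices within each class, is the identity. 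For the matching lower bound, I start from a distinguishing $d'$-coloring of $G$; by the local observation every class is rainbow, inducing a super-coloring of $\widetilde G$ with at most $\binom{d'}{t}$ colors. Given $\widetilde\beta\in\aut(\widetilde G)$ preserving super-colors, I lift it by taking, on each class, the unique color-preserving bijection onto its image (which exists precisely because the two classes share a super-color); the lift $\beta$ is a color-preserving automorphism of $G$, hence the identity, forcing $\widetilde\beta=\mathrm{id}$. So $\widetilde d=\dist(\widetilde G)\le\binom{d'}{t}$, and minimality of $d$ gives $d'\ge d$; the two bounds yield $\dist(G)=d$.

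The step I expect to need the most care is the lifting in the lower bound: one must verify that assembling the per-class color-preserving bijections genuinely produces a graph automorphism, and this is exactly where the defining property of the twin quotient is used, namely that adjacency of classes in $\widetilde G$ makes every cross-pair an edge of $G$ and that each class is independent. The well-definedness of the super-coloring, with each class using exactly $t$ colors, also relies on all classes having the common size $t$, which is the standing hypothesis.
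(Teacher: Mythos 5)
This theorem is not proved in the paper at all: it is imported verbatim from the reference \cite{CL2023} (Theorem 2 there), so there is no in-paper argument to compare against. Judged on its own merits, your proof is correct and complete. The three pillars all check out: (i) automorphisms of $G$ descend to $\widetilde G$, and automorphisms of $\widetilde G$ lift to $G$ because adjacency between twin classes is all-or-nothing and each class is independent; (ii) in any distinguishing coloring of $G$ each twin class must be rainbow, since transposing two like-colored nonadjacent twins is a nontrivial color-preserving automorphism; (iii) the resulting ``super-coloring'' of $\widetilde G$ by $t$-subsets of colors mediates both bounds --- injecting a distinguishing $\widetilde d$-coloring of $\widetilde G$ into $t$-subsets of $\{1,\dots,d\}$ gives $\dist(G)\le d$, and the unique color-preserving per-class lifting of a super-color-preserving $\widetilde\beta\in\aut(\widetilde G)$ gives $\binom{d'}{t}\ge\widetilde d$ for any distinguishing $d'$-coloring of $G$, hence $d'\ge d$ by minimality. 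You are also right about where the care is needed: the lifting step is exactly where the all-or-nothing adjacency of the quotient and the uniform class size $t$ are used. This is the natural argument and, as far as one can tell, essentially the one in the cited reference.
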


These theorems can be used to find the symmetry parameters of all Praeger-Xu graphs with twins, $\px(n,1)$.

\begin{thm}\label{thm:DetkIsOne}
For $n=4$, $\det(\px(4,1)) = 6$ and $\dist(\px(4,1)) = 5$.
For  all $n \neq 4$,
$\det(\px(n,1)) = n$ and $\dist(\px(n,1)) = 3$.
\end{thm}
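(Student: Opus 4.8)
The plan is to reduce both computations to the twin quotient graph $\widetilde{\px(n,1)}$ and then invoke Theorems~\ref{thm:DettG} and~\ref{thm:DistTwins}, so the first step is to identify this quotient explicitly. By Theorem~\ref{thm:NoTwin}, for $n \neq 4$ the twin classes are exactly the fibres $\F_0, \dots, \F_{n-1}$, each of size $2$, whereas for $n = 4$ there are two classes, $\F_0 \cup \F_2$ and $\F_1 \cup \F_3$, each of size $4$. Since every vertex of $\F_i$ is adjacent to every vertex of $\F_{i+1}$ and $\F_{i-1}$, the quotient adjacency is inherited directly from the cyclic structure on fibres: I would show $\widetilde{\px(n,1)} \cong C_n$ for $n \neq 4$ and $\widetilde{\px(4,1)} \cong K_2$. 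Both identifications follow immediately from Definition~\ref{defn:pxnk} once the classes are known.

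For the determining number I would apply Theorem~\ref{thm:DettG}. The key observation is that every vertex of $\px(n,1)$ lies in a nontrivial twin class, so a minimum twin cover $T$ meets every class, and hence the projected set $\widetilde T = \{[u] : u \in T\}$ is the \emph{entire} vertex set of $\widetilde G$. Consequently the smallest determining set $\widetilde S$ of $\widetilde G$ containing $\widetilde T$ is $\widetilde T$ itself, which forces $R = \emptyset$ and $S = T$. Thus $\det(\px(n,1)) = |T| = \sum (\,|\text{class}| - 1)$, which equals $n \cdot 1 = n$ for $n \neq 4$ and $2 \cdot 3 = 6$ for $n = 4$.

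For the distinguishing number I would apply Theorem~\ref{thm:DistTwins}, which needs $\dist(\widetilde G)$ together with the common twin multiplicity $t$. Recalling the standard values $\dist(C_n) = 3$ for $n \in \{3,5\}$, $\dist(C_n) = 2$ for $n \geq 6$, and $\dist(K_2) = 2$: for $n \neq 4$ we have $t = 2$, and in every case the smallest $d$ with $\binom{d}{2} \geq \dist(C_n)$ is $d = 3$, since $\binom{3}{2} = 3$ covers both $\widetilde d = 2$ and $\widetilde d = 3$ while $\binom{2}{2} = 1$ does not. For $n = 4$ we have $t = 4$ and $\widetilde d = 2$, and the smallest $d$ with $\binom{d}{4} \geq 2$ is $d = 5$, since $\binom{4}{4} = 1 < 2 \leq 5 = \binom{5}{4}$.

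The main obstacle I anticipate is not any single calculation but rather noticing that the twin-cover constraint in Theorem~\ref{thm:DettG} overrides the small determining numbers of the quotient graphs: one might naively expect $\det(\px(n,1))$ to track $\det(C_n) = 2$, but the requirement $\widetilde T \subseteq \widetilde S$ forces $\widetilde S$ to be all of $V(\widetilde G)$, so the answer is governed instead by the class sizes. Once the quotient identifications are established rigorously, the remainder is bookkeeping with these sizes and the two binomial inequalities.
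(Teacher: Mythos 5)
Your proposal is correct and follows essentially the same route as the paper: identify the twin quotient graphs $\widetilde{\px}(4,1)=K_2$ and $\widetilde{\px}(n,1)=C_n$, note that a minimum twin cover projects onto the entire quotient vertex set so Theorem~\ref{thm:DettG} forces $S=T$, and then apply Theorem~\ref{thm:DistTwins} with the same binomial computations. No gaps; the bookkeeping with class sizes and the inequalities $\binom{3}{2}\ge 3$ and $\binom{5}{4}\ge 2$ matches the paper's argument exactly.
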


\begin{proof}
For $n=4$, there are only two equivalence class under the twin relation, namely  $[(0,0)]=\F_0 \cup \F_2$ and $[(1,0)] = \F_1 \cup \F_3$, so the twin quotient graph is $\widetilde{\px}(4,1) = K_2$. In this case, $T = \{(0,0), (0,1), (2,0), (1,0), (1,1), (3,0)\}$ is a minimum twin cover with $\widetilde T = V(\widetilde{\px}(4,1))$. 
The only determining set $\widetilde S$ of the twin quotient graph containing $\widetilde T$ is the entire vertex set. 
Hence by Theorem~\ref{thm:DettG}, $T$ is a minimum determining set and so $\det(\px(4,1)) = 6$. 
Since every vertex of $\px(4,1)$ is in a set of $t=4$ mutual twins and $\dist(K_2)= 2$, by Theorem~\ref{thm:DistTwins}, $\dist(\px(4,1)) = 5$.

Next assume $n \neq 4$. In this case, the equivalence classes under the twin relation are the fibres of $\px(n,1)$ 
 and so $T=\{(i,0):i\in\Z_n\}$ is a minimum twin cover of $\px(n,1)$.
Additionally, every vertex in $V(\px(n,1))$ belongs to one of these fibres, so $V(\widetilde{\px}(n,1))=\widetilde{T}$. Again the only determining set $\widetilde S$ of the twin quotient graph containing $\widetilde T$ is the entire vertex set and so by Theorem~\ref{thm:DettG}, $T$ is a minimum determining set. Thus  $\det(\px(n,1)) = |T| = n$.

Since every fibre in $\px(n,1)$ is a vertex in $\widetilde{\px}(n,i)$, and vertices in $\mathcal{F}_i$ are adjacent only to vertices in $\mathcal{F}_{i+1}$ and $\mathcal{F}_{i-1}$, $\widetilde{\px}(n,1)= C_n$. 
Hence, $\widetilde{d}=\text{Dist}(\widetilde{\px}(n,1))=\text{Dist}(C_n)=3$ if $n \in \{3, 5\}$, and $\widetilde{d}=2$ if  $n\geq 6$. 
Since each vertex is in a set of $t=2$ twins, by Theorem~\ref{thm:DistTwins},
$d$ is the smallest integer such that ${d\choose2} \geq3$ if $n \in \{3, 5\}$, and  
$d$ is the smallest integer such that ${d\choose2}\geq2$ if $n \ge 6$. In both cases, $d=3$.
\end{proof}

\section{Determining $\px(n,k)$, $k \ge 2$}\label{sec:DetNoTwins}

In this section, we find the determining number for twin-free Praeger-Xu graphs. Recall from  Section~\ref{sec:PXIntro} that for $n \neq 4$, $\mathcal A = K \rtimes \langle \rho, \mu \rangle=\aut(\px(n,k))$, whereas for $n=4$, $\mathcal A$ is a proper subgroup of $\aut(\px(4,k))$. We begin with a lemma that applies to all Praeger-Xu graphs and apply it to the general case $n \neq 4$. We then consider the exceptional cases $\px(4,2)$ and $\px(4,3)$.

\begin{lem}\label{lem:tauPreservesEven} 
Let $i\in\Z_n$, $S_i\subseteq \F_i$
and $\tau = \tau_0^{u_0} \cdots \tau_{n-1}^{u_{n-1}}\in K$. If $\tau(S_i)=S_i$ and $|S_i|$ is odd, then $\tau$ acts trivially on $\F_i$; equivalently,
\[u_i = u_{i+1} = 
\dots = u_{i+k-1} = 0.\]   
\end{lem}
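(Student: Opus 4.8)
The plan is to reduce the statement to a parity argument by first understanding the restriction of $\tau$ to the single fibre $\F_i$ as a translation on the group $\Z_2^k$. First I would recall from the definition of the generators $\tau_s$ that, among $\tau_0, \dots, \tau_{n-1}$, the only ones acting nontrivially on $\F_i$ are $\tau_i, \tau_{i+1}, \dots, \tau_{i+k-1}$, and that $\tau_{i+j}$ flips precisely the bit $x_j$ of each vertex $(i,x) \in \F_i$. Since these generators commute and each has order $2$, their composite $\tau$ acts on $\F_i$ by
\[
\tau \cdot (i,x) = (i,\, x + w), \qquad w = w_0 w_1 \cdots w_{k-1} \in \Z_2^k, \quad w_j = u_{i+j}.
\]
In particular, $\tau$ acts trivially on $\F_i$ exactly when $w = \0$, which is precisely the condition $u_i = u_{i+1} = \dots = u_{i+k-1} = 0$; this confirms that the two formulations of the conclusion coincide, so it suffices to prove $w = \0$.

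With this reformulation, identifying $\F_i$ with $\Z_2^k$, the restriction of $\tau$ to $\F_i$ is the translation $t_w \colon x \mapsto x + w$. The key observation is that if $w \neq \0$, then $t_w$ is fixed-point-free, since $x + w = x$ forces $w = \0$. Because $t_w$ is an involution (as $2w = \0$ in $\Z_2^k$), whenever $w \neq \0$ every orbit of $t_w$ on $\F_i$ has size exactly $2$. I would then note that any $t_w$-invariant subset $S_i$ is a disjoint union of such orbits, so its cardinality $|S_i|$ must be even.

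The lemma then follows by contraposition: under the hypotheses $\tau(S_i) = S_i$ and $|S_i|$ odd, the parity obstruction just established rules out $w \neq \0$, forcing $w = \0$, i.e.\ $\tau$ acts trivially on $\F_i$. I expect the only step requiring genuine care to be the first one, namely correctly matching each generator $\tau_{i+j}$ to the bit $x_j$ it flips and verifying that these commuting order-two maps assemble into a single coordinatewise XOR by a fixed string, rather than something more intricate. Once the restricted action is recognized as a translation of $\Z_2^k$, the fixed-point-free parity argument is immediate and is the conceptual heart of the proof.
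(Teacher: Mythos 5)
Your proposal is correct and follows essentially the same route as the paper: both arguments rest on the observation that a nontrivial $\tau$ acts on $\F_i$ as a fixed-point-free involution, so any invariant subset splits into orbits of size $2$ and must have even cardinality, giving the conclusion by contraposition. The only difference is cosmetic -- you make explicit the translation-by-$w$ description of $\tau\vert_{\F_i}$ on $\Z_2^k$, a fact the paper's proof uses implicitly when it asserts that a nontrivial $\tau$ moves \emph{every} vertex of $\F_i$.
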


\begin{proof} 
Assume $\tau(S_i)=S_i$ and that $\tau$ acts nontrivially on $\F_i$. Let $s\in S \subseteq \F_i$, so by assumption, $\tau\cdot s\in S_i$.
Since every element in $K\simeq \Z_2^n$ has order $2$, 
$\tau\cdot(\tau\cdot s)=\tau\inv\cdot(\tau\cdot s) = s.$ 
Additionally, since $\tau$ acts nontrivially on every vertex of  $\F_i$, $s\neq \tau\cdot s$. Thus, $S$ can be partitioned into pairs of the form $\{s, \tau \cdot s\}$, implying that $S_i$ has an even number of vertices in total.
\end{proof}

\begin{thm}\label{thm:DetnNot4} 
For $n\neq4$,
$$\det(\px(n,k))=\begin{cases}\lceil \frac{n}{k}\rceil, \quad & \text{ if } k\neq\frac{n}{2},\\ \lceil \frac{n}{k}\rceil + 1 = 3, & \text{ if } k=\frac{n}{2}.\end{cases}$$
\end{thm}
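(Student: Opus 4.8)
The plan is to prove this in two directions: first establish the upper bound by exhibiting an explicit determining set of the claimed size, then prove the matching lower bound by showing no smaller set can be determining. Throughout, I would work with the structural fact that for $n \neq 4$, $\aut(\px(n,k)) = \mathcal A = K \rtimes D_n$, so any automorphism fixing a candidate determining set decomposes as $\tau\delta$ with $\tau \in K$ and $\delta \in D_n$.

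For the lower bound, the key observation is that a determining set $S$ must meet enough fibres to pin down both the dihedral part $\delta$ and the bit-flipping part $\tau$. I would argue in two stages. First, $S$ must intersect enough fibres that no nontrivial $\delta \in D_n$ can fix the induced set of occupied fibres setwise while being consistent with a bit-flip correction; since $\delta$ acts on fibre indices as an element of $D_n$ acting on $\Z_n$, and a vertex in $\F_i$ can only have its bits altered by $\tau_i, \dots, \tau_{i+k-1}$, this is where the $\lceil n/k \rceil$ lower bound should come from — roughly, each fibre $\F_i$ "controls" a window of $k$ consecutive indices $\{i, i+1, \dots, i+k-1\}$ through the $\tau_s$ that act on it, and to force every $u_m = 0$ one needs occupied fibres whose windows cover all of $\Z_n$, requiring at least $\lceil n/k\rceil$ fibres. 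Lemma~\ref{lem:tauPreservesEven} is the essential tool here: a single fixed vertex in $\F_i$ (an odd-sized $S_i$) forces $u_i = \dots = u_{i+k-1} = 0$, so placing one vertex in each of $\lceil n/k\rceil$ suitably spaced fibres kills all of $K$.

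For the upper bound, I would construct a set $S$ consisting of one vertex in each of $\lceil n/k\rceil$ fibres chosen so their index-windows cover $\Z_n$, verify via Lemma~\ref{lem:tauPreservesEven} that any fixing $\tau$ must be trivial, and then check that no nontrivial rotation or reflection $\delta$ can permute the occupied fibres in a way consistent with fixing $S$ pointwise. The delicate case is $k = n/2$, where $\lceil n/k \rceil = 2$: here two fibres with overlapping windows can cover $\Z_n$, but a single reflection $\mu_s$ swapping or preserving those two fibres may survive, so one extra vertex is genuinely needed, giving $\det = 3$. I would handle this by showing explicitly that any two-fibre set admits a nontrivial reflection (using Lemma~\ref{lem:ijReflec} to locate which $\mu_s$ preserves or interchanges the chosen fibres), then confirm three well-placed vertices suffice.

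The main obstacle I anticipate is the lower bound in the boundary regime, specifically ruling out \emph{all} surviving reflections and rotations simultaneously rather than treating $K$ and $D_n$ independently — an automorphism $\tau\delta$ can fix $S$ pointwise even when neither $\tau$ nor $\delta$ alone does, because $\tau$ can correct the bit-string damage that $\delta$ inflicts. So the argument cannot simply say "$\delta$ must fix occupied fibres and $\tau$ must act trivially"; it must track the interaction, showing that once the fibre-windows cover $\Z_n$ the correcting $\tau$ is forced to be trivial, which in turn forces $\delta = \mathrm{id}$ on the bitstrings and hence (given the occupied fibres) $\delta = \mathrm{id}$ outright. Getting the covering/counting argument to interface cleanly with the reflection analysis, especially distinguishing the $k = n/2$ case from $k \neq n/2$, is where the real care is required.
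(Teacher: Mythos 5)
Your overall architecture matches the paper's: decompose $\alpha=\tau\delta\in K\rtimes D_n$, use Lemma~\ref{lem:tauPreservesEven} to kill the $K$-part, a gap/window-counting argument for the lower bound, and separate treatment of $k=n/2$. But your resolution of what you call the main obstacle is backwards, and the intermediate claim it rests on is false. You propose: windows covering $\Z_n$ forces the correcting $\tau$ to be trivial, which in turn forces $\delta=\mathrm{id}$. Counterexample: in $\px(6,4)$ take $S=\{(0,0000),(3,0000)\}$. The windows $\{0,1,2,3\}$ and $\{3,4,5,0\}$ cover $\Z_6$, yet the reflection $\mu$ fixes both vertices pointwise (here the correcting $\tau$ is trivial while $\delta=\mu\neq\mathrm{id}$), and with non-palindromic bitstrings one can even get a nontrivial $\tau$ with $\tau\mu$ fixing $S$. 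The deeper problem is that Lemma~\ref{lem:tauPreservesEven} cannot be invoked on the $\tau$-part of $\tau\delta$ until you already know $\delta$ fixes each occupied fibre: its hypothesis is $\tau(S_i)=S_i$, and $\tau\delta$ fixing $S$ pointwise only gives information about $\tau$ on the fibres $\F_{\delta(i)}$. The paper argues in the opposite order, and the ``interaction'' you worry about never actually arises for determining sets: since $K$ preserves fibres, the induced action of $\tau\delta$ on fibre indices is exactly $\delta$, so pointwise fixing of $S$ forces $\delta$ to fix every occupied index of $C_n$; a nontrivial element of $D_n$ fixes at most two vertices of $C_n$, and these must be antipodal, so as soon as two occupied fibres are non-antipodal, $\delta=\mathrm{id}$ --- and only then is Lemma~\ref{lem:tauPreservesEven} applied to conclude $\tau=\mathrm{id}$.

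The same issue breaks your upper bound as stated: ``one vertex in each of $\lceil n/k\rceil$ fibres whose windows cover $\Z_n$'' is not sufficient. When $n$ is even and $k>n/2$ (so $\lceil n/k\rceil=2$), the antipodal pair $\F_0,\F_{n/2}$ has covering windows, but as the example above shows the resulting set need not be determining. The paper's set occupies $\F_0,\F_k,\F_{2k},\dots$, and the two-fibre case is exactly where the hypothesis $k\neq n/2$ is invoked to guarantee $\F_0$ and $\F_k$ are non-antipodal. Finally, in the $k=n/2$ lower bound, your claim that ``any two-fibre set admits a nontrivial reflection'' is wrong for vertices in distinct non-antipodal fibres: no reflection fixes two non-antipodal fibre indices, and what survives there is a pure $\tau_m$ coming from an uncovered window index, just as in the counting argument; reflections that merely swap the two occupied fibres are irrelevant, since a determining set only needs to exclude automorphisms fixing it pointwise. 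Only the antipodal pair genuinely requires the reflection-plus-correction automorphism $\tau\mu$, which is the paper's second case.
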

\begin{proof}
First suppose $k\neq\frac{n}{2}$. 
Let $S\subset V(\px(n,k))$ such that $|S|=\lceil\frac{n}{k}\rceil-1$, and assume $S$ is a determining set for $\px(n,k)$.
The set of indices of the fibres containing elements of $S$ is $$I_S=\{i\in\Z_n\mid S\cap\F_i\neq\emptyset\}=\{i_1,i_2,\dots,i_s\},$$ where  $0\le i_1 < i_2 \dots < i_s \le n-1$. Then the number of fibres in the gaps between these fibres are $i_2-i_1-1,i_3-i_2-1,\dots,n+i_1-i_s-1$. 
If  $i_{p+1}-i_p-1\geq k$ for some $i_p, i_{p+1} \in S$ , then
$\tau_{i_{p+1}-1}$ is a nontrivial automorphism that fixes $S$, contradicting the assumption that $S$ is a determining set.  Thus each gap contains at most $k-1$ fibres.
Since every fibre either contains a vertex in $S$ or is in a gap between two such fibres, the total number of fibres satisfies
$$|I_S|+|I_S|(k-1)=|I_S|k\ \le |S| k =(\lceil\tfrac{n}{k}\rceil-1)k<n,$$
a contradiction.
Thus, $\det(\px(n,k))>\lceil\frac{n}{k}\rceil-1$.

We claim 
$
S=\big\{v_{ik}\in\F_{ik}:i\in\{0,1,\dots,\lceil\tfrac{n}{k}\rceil-1\}\big\},
$
where $v_{ik}$ is any vertex in $\F_{ik}$,
is a determining set for $\text{PX}(n,k)$. 
Let $\alpha=\tau\delta\in\mathcal A=\aut(\px(n,k))$ such that $\alpha$ fixes every vertex in $S$.
Since $k\neq \tfrac{n}{2}$, $\F_0$ and $\F_k$ are non-antipodal fibres. Since the induced action of $\alpha$ on the fibres is an element of $D_n$ that fixes non-antipodal vertices  $0$ and $k$ in $C_n$, $\delta=\text{id}$.

Next we show that $\tau=\tau_0^{u_0}\tau_1^{u_1}\cdots\tau_{n-1}^{u_{n-1}}=\text{id}$.
Let $S_0 = S\cap \F_0$; note that $|S_0|=1$ is odd. Since $\tau$ fixes every vertex in $S$, $\tau(S_0) = S_0,$
and so by Lemma~\ref{lem:tauPreservesEven}, $u_0 = u_1 = \dots = u_{k+1} = 0$. Applying the same logic to $S_k = S\cap \F_k$, we get $u_k = u_{k+1} = \dots = u_{2k-1} = 0$. Iterating this argument for $S_{ik}$ for all $i \in \{0, 1, \dots \lceil\frac{n}{k}\rceil - 1\}$, we conclude $u_0 =\dots= u_{n-1}=0$. Thus $\tau = \text{id}$. By definition, $S$ is a determining set and so $\det(\px(n,k))\leq |S|=\lceil\frac{n}{k}\rceil$.

Now suppose $k=\frac{n}{2}$. 
Assume $S$ is a determining set of cardinality $2$. Since $\px(n,k)$ is vertex-transitive,  
we can assume without loss of generality that $S=\{z=(0,00\cdots 0),v= (i,x)\}$. There are three cases: $i=0$, $i=k=\frac{n}{2}$, or $i\neq 0$ and $i\neq k=\frac{n}{2}$.

For the first case, assume $i=0$, so both $z,v\in\F_0$.
Then since $\tau_{n-1}$ affects $\F_i$ if and only if $i\in\{k=\frac{n}{2},\frac{n}{2}+1,\dots,n-1\}$, and $0$ is not in that set, $\tau_{n-1}$ is a nontrivial automorphism that fixes $S$, a contradiction. 

For the second case, assume $i=k=\frac{n}{2}$. Then $z$ and $v$ are in antipodal fibres. 
If we we apply the reflection $\mu$ to $S$, we get 
$$
\mu\big(S\big)=\big\{(0,(00\cdots0)^-),(-k,x^-)\big\}=\big\{(0,00\cdots0),(k,x^-)\big\}.
$$
For each $m$ such that $x_m\neq (x^-)_m$, 
the automorphism $\tau_{k+m}\in K$  flips this bit in the bitstring component of every vertex in $\F_k$, but has no effect on the vertices in $\F_0$. 
Let $\tau=\tau_0^{u_0}\tau_1^{u_1}\cdots\tau_{n-1}^{u_{n-1}}$ where $u_{k+m} = 1$ if $x_m \neq (x^-)_m$ and $0$ otherwise. Then $\zeta = \tau \mu$ fixes both $z$ and $v$, contradicting our assumption that $S$ is a determining set.

In the third case, $i\neq0$ and $i\neq k =\frac{n}{2}$, so we can assume that in $\mathbb Z$, either $0<i<k$ or $k < i < n$.
In the first case,  $\tau_{n-1}$ fixes both $z$ and $v$, and in the second,  $\tau_{i-1}$ fixes both $z$ and $v$. Hence, $S$ is not a determining set for $\px(n,k)$.
As we have covered all possible cases, we conclude $\det(\px(n,k))>2$.

Finally, let $S = \{v_0, v_1, v_k\}$ where $v_0\in \F_0$, $v_1 \in \F_1$ and $v_k \in \F_k$ and assume $\alpha = \tau \delta$ fixes $S$. The induced action of $\alpha$ on the fibres corresponds to an element of the dihedral group that fixes non-antipodal vertices $0$ and $1$ in $C_n$, so $\delta = \text{id}$. Next,  $\tau = \tau_0^{u_0} \cdots \tau_{n-1}^{u_{n-1}}$ fixes one vertex in each of $\F_0$ and $\F_k$, so by Lemma~\ref{lem:tauPreservesEven}, $u_0 = \dots = u_{k-1} = u_k = \dots = u_{2k-1}= u_{n-1} = 0$. Thus $\tau = \text{id}$. By definition, $S$ is a determining set for $\px(n,k)$ so $\det(\px(n,k))\leq|S|=3$. Thus $\det(\px(n,k))=3$.
\end{proof}

We now turn our attention to the exceptional cases $\px(4,2)$ and $\px(4,3)$. It is stated without proof in \cite{SGOTPV} that $Q_4\cong\px(4,2)$; 
we provide an explicit isomorphism.
A bitstring $x$ is \emph{even} if $x$ has an even number of ones, and  $x$ is \emph{odd} otherwise.
Define $\varphi:V(Q_4)\rightarrow V(\px(4,2))$ by $$\varphi(x_0x_1x_2x_3)=\begin{cases}(j,x_1x_3), \quad &\text{ if } j\text{ is odd},\\ (j,x_3x_1), &\text{ if } j\text{ is even},\end{cases}$$
where $$j=\begin{cases}
0, \quad & \text{ if } x_0x_1\text{ and } x_2x_3 \text{ are both odd},\\
1, & \text{ if } x_0x_1\text{ is odd and } x_2x_3\text{ is even},\\
2, & \text{ if } x_0x_1\text{ and } x_2x_3\text{ are both even},\\
3, & \text{ if } x_0x_1\text{ is even and } x_2x_3\text{ is odd}.
\end{cases}$$ 

Figure~\ref{fig:Q4PX42} is a drawing of $Q_4$, with vertices positioned as they would be in a canonical drawing of $\px(4,2)$, as explained at the beginning of Section~\ref{sec:PXIntro}.

\begin{figure}[h]
\includegraphics[scale=0.35, center]{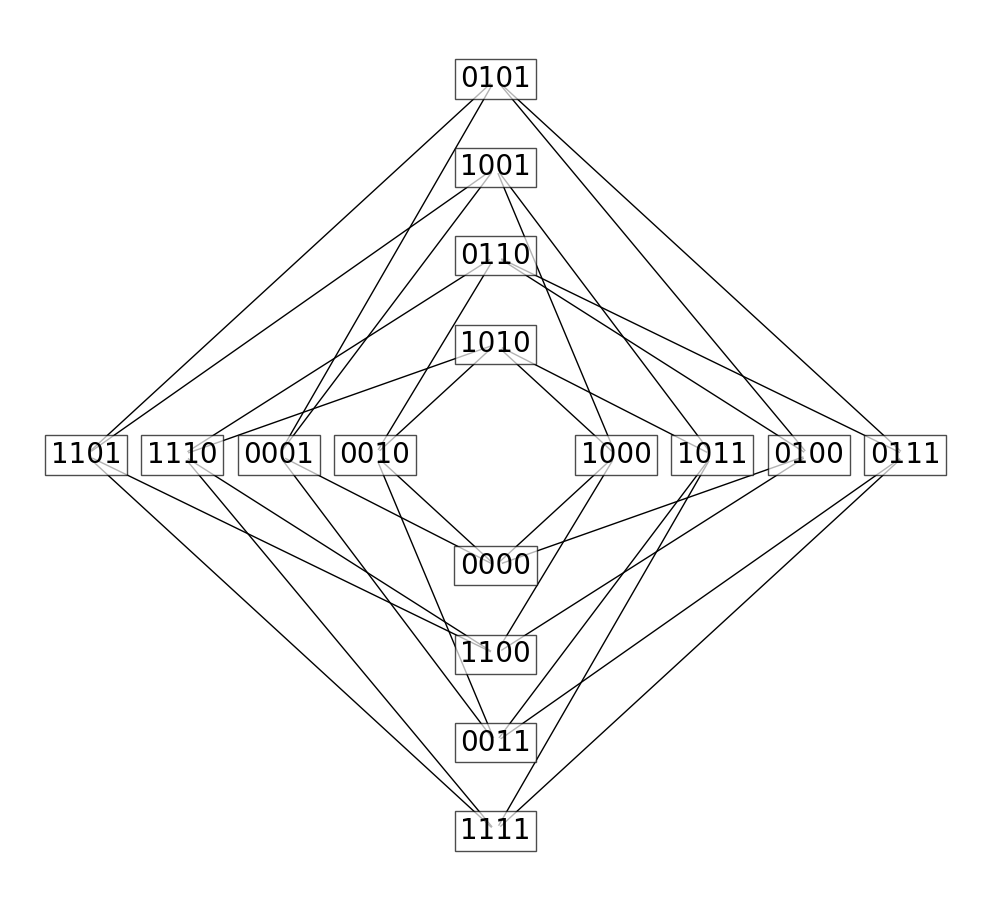}
\caption{$Q_4$ is isomorphic to  $\px(4,2)$.}
\label{fig:Q4PX42}
\end{figure}

\begin{prop}\label{prop:PX42}
$ \det(\px(4,2))=3, \dist(\px(4,2))=2 \text{ and }\rho(\px(4,2))=5.$
\end{prop}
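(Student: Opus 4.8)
The plan is to exploit the isomorphism $\varphi: Q_4 \to \px(4,2)$ established just before the proposition, so that all three parameters can be read off from the well-understood hypercube $Q_4$. The key observation is that determining number, distinguishing number, and cost of $2$-distinguishing are all isomorphism invariants, so it suffices to compute $\det(Q_4)$, $\dist(Q_4)$, and $\rho(Q_4)$. The hypercube is a Cartesian power, $Q_4 = K_2^{\Box 4}$, and $\aut(Q_4) \cong \Z_2^4 \rtimes S_4$ has order $2^4 \cdot 4! = 384$; I would recall from the literature cited in the introduction (\cite{BoCo2004} for distinguishing hypercubes, and known values of the determining number of $Q_n$) that $\dist(Q_4) = 2$ and that $\det(Q_4) = 3$. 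This immediately yields the first two equalities, $\det(\px(4,2)) = 3$ and $\dist(\px(4,2)) = 2$.

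For the determining number, the lower bound $\det(Q_4) \ge 3$ follows because $Q_4$ is vertex- and edge-transitive with a rich automorphism group: any single vertex or pair of vertices leaves a nontrivial stabilizer (for instance, a transposition of the two coordinate directions not distinguished by a chosen pair of vertices). For the upper bound I would exhibit an explicit set of three vertices whose pointwise stabilizer is trivial: choosing $\0 = 0000$ together with two neighbors fixes the identity permutation on coordinates and kills all the coordinate-flip automorphisms, since fixing $\0$ rules out translations and fixing enough low-weight vertices pins down the coordinate labeling. I would verify trivial stabilizer directly on these three vertices.

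For the cost $\rho(\px(4,2)) = 5$, which is the genuinely substantive claim, I would again transfer to $Q_4$ and show $\rho(Q_4) = 5$. The upper bound requires exhibiting a $2$-distinguishing coloring of $Q_4$ whose smaller color class has exactly five vertices and verifying that no nontrivial automorphism preserves the coloring; a natural candidate is a carefully chosen asymmetric set of five vertices (for example one of weight $0$, some of weight $1$, and some of higher weight, arranged to break all coordinate permutations and all translations). The lower bound $\rho(Q_4) \ge 5$ is the hard part: I must show that no set of four or fewer vertices, used as one color class, can destroy every nontrivial automorphism. The main obstacle is this lower bound argument, since $|\aut(Q_4)| = 384$ is large and I must rule out the existence of \emph{any} $4$-element color-preserving-breaking set. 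I would handle it by a counting or pigeonhole argument: a color class $C$ with $|C| \le 4$ is preserved setwise by some nontrivial automorphism unless $C$ is ``rigid'' in a strong sense, and I would argue that small subsets of the $16$ vertices of $Q_4$ always admit a stabilizing coordinate-permutation-plus-translation, leveraging the transitivity of $\aut(Q_4)$ on vertices and on edges to reduce to a manageable number of orbit representatives for $4$-subsets. Alternatively, if a clean structural argument proves elusive, the bound can be confirmed by a finite check over the orbits of small vertex subsets under the action of $\aut(Q_4)$.
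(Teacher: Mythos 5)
Your overall route --- transferring all three parameters to $Q_4$ through the isomorphism $\varphi$ --- is exactly the paper's; the paper then simply cites known results: $\det(Q_4)=\lceil\log_2 4\rceil+1=3$, $\dist(Q_4)=2$, and $\rho(Q_4)=5$ (Theorem 11 of the cited 2021 paper of Boutin). Where you deviate is in trying to reprove two of these facts, and both attempts have problems. First, your proposed determining set, $0000$ together with two of its neighbors, does not work: fixing $0000$ cuts the stabilizer down to the coordinate permutations $S_4$, but fixing two weight-one vertices, say $1000$ and $0100$, still leaves the transposition of coordinates $3$ and $4$ as a nontrivial automorphism fixing all three chosen vertices. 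You need supports that separate the coordinates, e.g.\ $\{0000,1100,1010\}$: a coordinate permutation preserving both $\{1,2\}$ and $\{1,3\}$ must fix every coordinate. This is precisely the $\lceil\log_2 n\rceil$ ``binary labeling'' construction behind the formula you quoted, and it is what your promised verification step would have forced you to discover.

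Second, and more seriously, the lower bound $\rho(Q_4)\ge 5$ --- which you yourself flag as the substantive claim --- is never actually proved in your proposal. The counting/pigeonhole idea fails on its face: $\binom{16}{4}=1820 > 384=|\aut(Q_4)|$, so there is no numerical obstruction to a $4$-subset having trivial setwise stabilizer (i.e.\ a full orbit of size $384$); ruling this out requires a genuine structural analysis of $4$-subsets (weights, pairwise distances, parity classes), none of which you supply. Your fallback, an exhaustive check over orbits of small subsets, would settle the question but is a computation, not the ``clean structural argument'' a proof needs, and you leave it unexecuted. The paper sidesteps all of this by invoking the published value $\rho(Q_4)=5$. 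So your proposal matches the paper in outline, but as written its central claim is unestablished: either cite the known result, as the paper does, or carry out the case analysis you only gesture at.
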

\begin{proof}
This follows immediately from previous work on the symmetry parameters of $Q_4$.
By Theorem 3 from \cite{Bo2009a}, $$\det(\px(4,2))=\det(Q_4)=\lceil\log_24\rceil+1=2+1=3.$$ Notably, this expression agrees with the formula given in  Theorem~\ref{thm:DetnNot4} because $\lceil\frac{4}{2}\rceil+1=2+1=3$. By Theorem 5 from \cite{BoCo2004}, $\dist(\px(4,2))=\dist(Q_4)=2$; by Theorem 11 from \cite{Bo2021}, $\rho(\px(4,2))=\rho(Q_4)=5$.
\end{proof}

\begin{prop}\label{prop:detFourThree}
$\det(\px(4,3)) =2 = \lceil \frac{4}{3} \rceil.$   
\end{prop}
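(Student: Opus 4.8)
The plan is to prove the two inequalities $\det(\px(4,3)) \ge 2$ and $\det(\px(4,3)) \le 2$ separately, the upper bound requiring care because, as noted in \cite{SGOTPV}, $\mathcal A$ is a proper subgroup of $\aut(\px(4,3))$, so a set that is determining for $\mathcal A$ need not be determining for the full group.

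For the lower bound, I would argue that no single vertex is a determining set. Since $\px(4,3)$ is vertex-transitive, it suffices to exhibit one nontrivial automorphism fixing a chosen $v \in \F_0$. The automorphism $\tau_3 \in K$ alters the bitstring component only of the fibres $\F_i$ with $i \in \{3,2,1\}$, so it acts trivially on $\F_0$ and fixes $v$, yet $\tau_3 \ne \mathrm{id}$. Hence $\{v\}$ is not determining and $\det(\px(4,3)) \ge 2 = \lceil \tfrac 43 \rceil$.

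For the upper bound I would take $S = \{v_0, v_3\}$ with $v_0 \in \F_0$ and $v_3 \in \F_3$, the set prescribed by Theorem~\ref{thm:DetnNot4}, and first check that $S$ defeats every automorphism in $\mathcal A$. If $\alpha = \tau\delta \in \mathcal A$ fixes $S$, then $\delta \in D_4$ fixes the non-antipodal pair $0,3 \in C_4$; since a nontrivial rotation fixes no vertex and a reflection of $C_4$ fixes only an opposite pair of vertices, $\delta = \mathrm{id}$. Applying Lemma~\ref{lem:tauPreservesEven} to the singleton (odd) sets $S \cap \F_0$ and $S \cap \F_3$ then forces $u_0 = u_1 = u_2 = 0$ and $u_3 = u_0 = u_1 = 0$, so $\tau = \mathrm{id}$ and $\alpha = \mathrm{id}$.

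The main obstacle, and the crux of the proposition, is to rule out the exceptional automorphisms in $\aut(\px(4,3)) \setminus \mathcal A$. Here I would exploit that $\px(4,3)$ is bipartite with parts $A = \F_0 \cup \F_2$ and $B = \F_1 \cup \F_3$: since $v_0 \in A$ and $v_3 \in B$, any automorphism fixing $S$ preserves each part setwise. It then remains to show that such an automorphism cannot interchange $\F_0$ with $\F_2$ or $\F_1$ with $\F_3$, i.e.\ that fixing $v_0$ and $v_3$ forces the entire fibre partition to be preserved, so that the stabilizer of $S$ lies in $\mathcal A$ and the previous paragraph closes the argument. I expect to carry this out by producing explicit coset representatives for $\mathcal A$ in $\aut(\px(4,3))$ (the exceptional automorphisms guaranteed by \cite{SGOTPV}), realizing each as a non-dihedral permutation of the four positions corrected by a position-dependent bit change, and verifying directly that none of the nontrivial ones fixes both $v_0$ and $v_3$. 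Determining these exceptional automorphisms explicitly, together with the index of $\mathcal A$, is the technical heart; once established, combining it with the bipartite reduction and the $\mathcal A$-computation gives $\det(\px(4,3)) \le 2$, and with the lower bound, $\det(\px(4,3)) = 2 = \lceil \tfrac 43 \rceil$.
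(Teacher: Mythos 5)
There is a genuine gap, and it sits exactly at the point you flagged as the ``technical heart.'' Your plan takes $S = \{v_0, v_3\}$ with $v_0 \in \F_0$ and $v_3 \in \F_3$ \emph{arbitrary}, and proposes to finish by checking that no exceptional automorphism (i.e.\ no element of $\aut(\px(4,3)) \setminus \mathcal A$) fixes both. That check cannot succeed for all choices, because the claim is false: the paper exhibits an explicit exceptional automorphism $\xi$ of order $2$ which has \emph{two fixed points in every fibre}; in particular $\xi$ fixes both $(0,000) \in \F_0$ and $(3,000) \in \F_3$, so $\{(0,000),(3,000)\}$ is not a determining set even though its two vertices lie in non-antipodal fibres. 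The missing idea is that for $\px(4,3)$ the \emph{bitstring components} of the two chosen vertices matter, not just their fibres: the paper chooses $S' = \{(0,000),(3,001)\}$ and shows no element of the coset $\mathcal A\xi$ fixes $S'$, by writing such an element as $\beta\xi$ with $\beta \in \mathcal A$, deducing from $\xi\cdot(3,001)=(1,010)$ that $\beta$ must fix $\F_0$ and send $\F_1$ to $\F_3$ (hence $\beta = \tau\mu$ by Lemma~\ref{lem:ijReflec}), and then deriving a contradiction from Lemma~\ref{lem:tauPreservesEven} and the fact that $\tau_3$ can only flip the $0$-th bit in $\F_3$. Your $\mathcal A$-stabilizer computation and your lower bound are both correct and match the paper, but without the careful selection of bitstrings the upper-bound argument collapses.

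A secondary point: the bipartite reduction buys you nothing. In a connected bipartite graph any automorphism fixing a vertex automatically preserves both parts setwise, and the exceptional automorphism $\xi$ does preserve $\F_0 \cup \F_2$ and $\F_1 \cup \F_3$ setwise (all of its $2$-cycles stay within one part); what $\xi$ fails to preserve is the finer partition into fibres. So the bipartition cannot distinguish $\mathcal A$ from $\mathcal A\xi$, and the step ``fixing $v_0$ and $v_3$ forces the fibre partition to be preserved'' is precisely the assertion that fails for $\{(0,000),(3,000)\}$. To repair your proof you would need to (i) construct $\xi$ (or the coset $\mathcal A\xi$) explicitly, as the paper does in Table~\ref{tab:2CyclesInXi}, (ii) observe that it has fixed points in every fibre, and (iii) choose the two vertices of $S$ so that their images under $\xi$ obstruct every element of $\mathcal A\xi$ --- which is the paper's argument.
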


\begin{proof}
 From \cite{SGOTPV}, $\mathcal A =K\rtimes \langle\rho,\mu\rangle$  is a proper subgroup of $\aut(\px(4,3))$ of index $2$. The proof of Theorem~\ref{thm:DetnNot4} can be used when $(n,k) = (4,3)$ to show that the only $\alpha \in \mathcal A$ that fixes 
 two vertices from non-antipodal fibres is the identity. However, more care must be taken when choosing the two vertices. For example, there is a nontrivial automorphism $\xi \in \aut(\px(4,3))$ that fixes both elements of $S = \{(0,000), (3,000)\}$. 
 More precisely, as a permutation $\xi$ is the product of the disjoint $2$-cycles in Table~\ref{tab:2CyclesInXi}. 
 Unlike for any $\alpha \in \mathcal A$, the fibres do not constitute a block system for $\xi$. 
 However, if we partition each fibre into vertices whose bitstrings are palindromic ($x = x^-$) and vertices whose bitstrings are nonpalindromic ($x \neq x^-$), these half-fibres constitute a block system for  $\xi$.

 \renewcommand{\arraystretch}{1.2}
 \begin{table}[h]
     \centering
     \begin{tabular}{|c|c c|}
     \hline
        within $\mathcal F_0$, $x = x^-$  &  ((0,010), (0,101)) &\\ \hline
        within $\mathcal F_2$, $ x \neq x^-$
          & ((2,001), (2,110)) &\\ \hline
        between $\mathcal F_0$, $x \neq x^-$ and $\mathcal F_2$, $ x = x^-$ & ((0,001), (2,000)), \,  & ((0,100), (2,010)) \\ & ((0,011), (2,101)),  &  ((0,110), (2,111)) \\ \hline
        within $\mathcal F_1$, $x \neq x^-$  &  ((1,100), (1,011)) &\\ \hline 
        within $\mathcal F_3$, $ x = x^-$
          & ((3,010), (3,101)) &\\ \hline
        between $\mathcal F_1$, $x = x^-$ and $\mathcal F_3$, $ x \neq x^-$ & ((1,000), (3,100)), & ((1,010), (3,001))\\ & ((1,101), (3,110)), & ((1,111), (3,011)) \\ \hline
     \end{tabular}
     \caption{Disjoint $2$-cycles of $\xi \in \aut(\px(4,3))$.}
     \label{tab:2CyclesInXi}
 \end{table}
 
Implicitly $\xi$ has eight fixed points, with two in each fibre, namely
$(0,000)$, $(0,111)$, $(1,001)$, $(1,110)$, $(2,100)$, $(2,001)$,  $(3,000)$ and $(3,111)$.
Since $\mathcal A$ has index $2$ in $\aut(\px(4,3))$, every automorphism of $\px(4,3)$ is in one of the two cosets, $\mathcal A$ and $\mathcal A \xi$.

 Next we show that $S' = \{(0,000), (3,001)\}$ is a determining set. The two vertices in $S'$ are from non-antipodal fibres, so no nontrivial automorphism in $\mathcal A$
 fixes both vertices in $S'$. Table~\ref{tab:2CyclesInXi} shows that $\xi$ clearly does not fix $(3,001)$; we must also show that no other automorphism in the coset $\mathcal A \xi$ fixes $S'$.

 Assume there exists $\beta \in \mathcal A$ such that $\beta \circ \xi$ fixes $S'$.
 Then 
 $(0,000) = \beta \circ \xi \cdot (0,000) = \beta\cdot (0,000)$ and 
$(3,001) = \beta \circ \xi \cdot (3,001) = \beta \cdot (1, 010)$.
The induced action of $\beta$ on the fibres fixes $\mathcal F_0$ and takes $\mathcal F_1$ to $\mathcal F_3$, so by Lemma~\ref{lem:ijReflec},
$\beta = \tau \circ \mu = \tau_0^{u_0} \tau_1^{u_1} \tau_2^{u_2} \tau_3^{u_3} \mu$.
Since $\beta$ and $\mu$ both fix $(0,000)$, so must $\tau$ and by Lemma~\ref{lem:tauPreservesEven}, $u_0=u_1=u_2 = 0$. However, because $\tau_3$ can only affect the $0$-th bit of the bitstring component of a vertex in $\F_3$, no value of $u_3$ satisfies  
$(3,001) = \beta \cdot (1,010) = \tau_3^{u_3} \mu \cdot (1,010) = \tau_3^{u_3} \cdot (3,010)$.
\end{proof}

The following theorem summarizes our results on the determining number of twin-free Praeger-Xu graphs.

\begin{thm}\label{thm:DET}
For all $n \ge 3$ and $2 \le k <n$,
$$\det(\px(n,k))=
\begin{cases}
\lceil\frac{n}{k}\rceil, \quad &\text{ if } k\neq \frac{n}{2},\\
\lceil\frac{n}{k}\rceil+1=3, &\text{ if } k=\frac{n}{2}.
\end{cases}$$
\end{thm}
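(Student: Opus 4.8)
The plan is to assemble this from the case analysis already completed, since Theorem~\ref{thm:DET} is a consolidation of Theorem~\ref{thm:DetnNot4}, Proposition~\ref{prop:PX42}, and Proposition~\ref{prop:detFourThree}. First I would split on whether $n = 4$. For every $n \neq 4$ with $2 \le k < n$, Theorem~\ref{thm:DetnNot4} already delivers exactly the claimed piecewise value, so nothing remains to be done in this range.

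The remaining work lives entirely at the exceptional value $n = 4$. Here the constraint $2 \le k < n$ forces $k \in \{2, 3\}$, so only the two graphs $\px(4,2)$ and $\px(4,3)$ occur. For $\px(4,2)$, Proposition~\ref{prop:PX42} gives $\det(\px(4,2)) = 3$; since $k = 2 = \frac{n}{2}$, the formula predicts $\lceil \frac{4}{2}\rceil + 1 = 3$, so the two agree. For $\px(4,3)$, Proposition~\ref{prop:detFourThree} gives $\det(\px(4,3)) = 2$; since $k = 3 \neq \frac{n}{2} = 2$, the formula predicts $\lceil \frac{4}{3}\rceil = 2$, and again these agree. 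Collecting the three cases proves the theorem.

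The only genuine subtlety --- and the reason the $n = 4$ graphs cannot simply be folded into Theorem~\ref{thm:DetnNot4} --- is that for $n = 4$ the group $\mathcal A = K \rtimes D_n$ is a proper subgroup of $\aut(\px(4,k))$, so the argument of Theorem~\ref{thm:DetnNot4} (fix the dihedral action on the fibres, then kill $\tau \in K$ via Lemma~\ref{lem:tauPreservesEven}) does not by itself rule out automorphisms lying outside $\mathcal A$. This is precisely what the two propositions handle separately: for $\px(4,2)$ by transporting the known value $\det(Q_4) = 3$ across the isomorphism $Q_4 \cong \px(4,2)$, and for $\px(4,3)$ by explicitly analyzing the extra coset $\mathcal A \xi$ and choosing a determining set no element of $\mathcal A \xi$ can fix. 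Consequently the main obstacle does not reside in this theorem at all but in having already discharged those two exceptional graphs; here the task reduces to the bookkeeping check that their determining numbers slot correctly into the $k = \frac{n}{2}$ versus $k \neq \frac{n}{2}$ dichotomy.
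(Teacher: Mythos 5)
Your proposal is correct and matches the paper's intent exactly: the paper presents Theorem~\ref{thm:DET} as a summary of Theorem~\ref{thm:DetnNot4} (all $n \neq 4$), Proposition~\ref{prop:PX42} ($\px(4,2)$, where $k = \tfrac{n}{2}$ gives $\lceil\tfrac{4}{2}\rceil + 1 = 3$), and Proposition~\ref{prop:detFourThree} ($\px(4,3)$, where $k \neq \tfrac{n}{2}$ gives $\lceil\tfrac{4}{3}\rceil = 2$), with no further argument needed. Your remark about why $n=4$ must be handled outside $\mathcal A = K \rtimes D_n$ is also precisely the reason the paper treats those two graphs separately.
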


\section{Interchangeable Vertices in $\px(n,k)$}\label{sec:Interchangeable}

As mentioned in Section~\ref{sec:PXn1}, if two vertices in a graph are twins, then the map that interchanges them and leaves all vertices fixed is a graph automorphism. By Theorem~\ref{thm:NoTwin}, if $k \ge 2$, then $\px(n,k)$ is twin-free, but we will find it useful to identify when two vertices can be interchanged by an automorphism, regardless of its action on other vertices.

\begin{defn}
Distinct vertices $u, v$ in a graph $G$ are interchangeable if and only if there exists $\alpha \in \aut(G)$ such that $\alpha\cdot u = v$ and $\alpha \cdot v = u$. \end{defn}

There are some situations where it is easy to find  an automorphism interchanging vertices $u=(i,x) $ and $v=(j,y)$ in $\px(n,k)$.
If $i=j$, there exists $\tau=\tau_0^{u_0} \cdots \tau_{n-1}^{u_{n-1}}\in K$ that flips exactly the right bits in bitstring components of vertices in $\F_i$. More precisely, for each $t \in \{0, 1, \dots, k-1\}$, if $x_t \neq y_t$, set $u_{i+t}=1$, and otherwise set $u_{i+t}=0$. The values of $u_m$ for any $m\in \mathbb Z_n$ not of the form $i+t$ do not affect the action of $\tau$ on $u$ and $v$.
If $i\neq j$, then we can find $\delta \in \langle \rho, \mu\rangle$ such that the induced action of $\delta$ on the fibres interchanges $\F_i$ and $\F_j$; we can then look for  $\tau=\tau_0^{u_0} \cdots \tau_{n-1}^{u_{n-1}}\in K$ so that $\tau$ flips exactly the right bits in both $\F_i$ and $\F_j$ to ensure that $\tau\delta$ interchanges $u$ and $v$. If $\F_i$ and $\F_j$ are far enough apart, then we can set the values $u_i, u_{i+1}, \dots u_{i+k-1}$ and $u_j, u_{j+1}, \dots u_{j+k-1}$ independently. However, if $M =\big\{i,i+1,i+2,\dots,i+k-1\big\}\cap\big\{j,j+1,j+2,\dots,j+k-1\big\} \neq \emptyset$, then for any $m \in M$, $\tau_m$ affects both vertices in $\F_i$ and $\F_j$ and there is potential for conflict. 

\begin{lem}\label{lem:interchangeable}
Let  $u=(i,x),v=(j,y)\in V(\px(n,k))$ and let 
\[
M =\big\{i,i+1,i+2,\dots,i+k-1\big\}\cap\big\{j,j+1,j+2,\dots,j+k-1\big\} \subseteq \mathbb Z_n.
\]
Then $u$ and $v$ are interchangeable by some $\alpha \in \mathcal A$
if and only if one of the following holds: 
\begin{enumerate}[(1)]
    \item $j=i$,
    \item $j \neq i$ and for all $m\in M$, $(x^-)_{m-j}=y_{m-j}$ if and only if $(y^-)_{m-i}=x_{m-i}$, 
    \item $j=i+\frac{n}{2}$ and for all $m\in M$, $x_{m-j}=y_{m-j}$ if and only if $y_{m-i}=x_{m-i}$.
\end{enumerate}
\end{lem}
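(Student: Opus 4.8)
The statement characterizes exactly when two vertices are interchangeable by an automorphism in $\mathcal A = K \rtimes \langle \rho, \mu\rangle$. My plan is to reduce everything to finding a $\tau \in K$ with the right bit-flipping behavior, after first fixing the fibre action $\delta \in \langle \rho, \mu\rangle$. The crucial observation from Section~\ref{sec:PXIntro} is that any interchanging automorphism $\alpha = \tau\delta$ must induce a fibre permutation $\delta$ that swaps $\F_i$ and $\F_j$ (since $\alpha \cdot u = v$ and $\alpha \cdot v = u$ force $\delta(i) = j$ and $\delta(j) = i$). I would organize the proof by which dihedral elements can interchange $\F_i$ and $\F_j$, as governed by Lemma~\ref{lem:ijReflec}.

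\textbf{Case structure.} First, the case $j = i$ (condition (1)) is exactly the situation described in the paragraph preceding the lemma: one takes $\delta = \mathrm{id}$ and chooses $\tau$ to flip precisely the bits where $x$ and $y$ differ, so interchangeability always holds with no further constraint. For $j \neq i$, I would observe that a reflection interchanging $\F_i$ and $\F_j$ exists by Lemma~\ref{lem:ijReflec}(1), namely $\mu_{i+j+k-1}$, which sends bitstrings $x \mapsto x^-$. Writing out $\alpha \cdot u = v$ and $\alpha \cdot v = u$ using the explicit formula for the action of $\tau\delta$ (with $y_{\mathrm{string}} = x^-$ under a reflection), the requirement becomes: there exist flip-parameters $u_m$ so that flipping turns $x^-$ into $y$ (in fibre $j$) and simultaneously turns $y^-$ into $x$ (in fibre $i$). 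Outside the overlap set $M$, the parameters governing $\F_i$ and $\F_j$ are independent, so they can always be set to do the right thing; the two interchange conditions are automatically consistent there. The content is entirely on $M$: for $m \in M$, the single parameter $u_{\delta(i)-(\text{index})}$ — I would need to track the indexing carefully — controls a bit in \emph{both} fibres at once, so the ``flip needed in $\F_j$'' and the ``flip needed in $\F_i$'' must agree. Spelling out when ``$x^-$ needs flipping at position $m-j$ to match $y$'' coincides with ``$y^-$ needs flipping at position $m-i$ to match $x$'' yields exactly the biconditional in condition (2): $(x^-)_{m-j} = y_{m-j} \iff (y^-)_{m-i} = x_{m-i}$. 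Condition (3), the antipodal case $j = i + n/2$, is the parallel analysis using a \emph{rotation} $\rho^{n/2}$ (which fixes bitstrings, hence $x$ rather than $x^-$ appears) in place of a reflection; here Lemma~\ref{lem:ijReflec} tells us rotations can only swap antipodal fibres, so this case is genuinely separate and gives the un-reversed biconditional $x_{m-j} = y_{m-j} \iff y_{m-i} = x_{m-i}$.

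\textbf{Main obstacle and bookkeeping.} The real work, and the step I expect to be most error-prone rather than conceptually hard, is the index bookkeeping: translating ``$\tau_m$ flips bit $x_{\delta(i)-m}$ of $\F_i$'' through both the reflection's string-reversal and the rotation's fibre shift, and verifying that the \emph{same} underlying parameter $u_m$ (for $m \in M$) is what controls the relevant bit in each fibre. I would handle this by carefully writing, for each $m \in M$, which bit position of $\F_i$ and which of $\F_j$ the generator $\tau_m$ touches, then equating the ``flip-is-needed'' indicator on the $\F_j$ side with that on the $\F_i$ side. For the forward direction I would extract these necessary consistency conditions from an assumed interchanging $\alpha$; for the converse, given that the stated biconditional holds, I would explicitly construct $\tau$ by setting $u_m = 1$ exactly when a flip is required (well-defined on $M$ precisely because of the biconditional, and free to choose off $M$), and then verify $\tau\delta$ interchanges $u$ and $v$. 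One subtlety to flag is that in the antipodal case $j = i + n/2$ one might \emph{also} be able to use a reflection (when $n$ is even and parity permits), so I should note that it suffices to exhibit one interchanging automorphism, and the rotation-based condition (3) is the cleanest to state; I would confirm that the lemma's three conditions are intended as the complete set of achievable cases under $\mathcal A$ and that no overlap between (2) and (3) causes an inconsistency.
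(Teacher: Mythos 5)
Your proposal is correct and takes essentially the same approach as the paper: both decompose an interchanging $\alpha=\tau\delta$, use the fibre block system and Lemma~\ref{lem:ijReflec} to reduce to $\delta$ being either the reflection swapping $\F_i$ and $\F_j$ (yielding the reversed biconditional of condition (2)) or the rotation $\rho^{n/2}$ in the antipodal case (yielding condition (3)), locate all consistency constraints on the overlap set $M$, and prove the converse by explicitly constructing $\tau$ with $u_m=1$ exactly where a flip is needed. The only difference is organizational—the paper runs the forward direction by assuming (1) and (2) fail and deducing (3), while you case directly on the type of $\delta$—but the mathematical content is identical.
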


\begin{proof}
Assume $u$ and $v$ are interchangeable by $\alpha = \tau\delta \in\mathcal{A}$, but neither (1) nor (2) holds. Then
$j\neq i$ and  for some $m\in M$, either $(x^-)_{m-j}=y_{m-j}$ but $(y^-)_{m-i}\neq x_{m-i}$,  or $(x^-)_{m-j}\neq y_{m-j}$ but
$(y^-)_{m-i}= x_{m-i}$.
As usual, either $\delta = \rho^s$ or $\delta=\mu_s$ for some $s\in\Z_n$.

If $\delta=\mu_s$, then $\mu_s\cdot u=(j,x^-)$ and $\mu_s\cdot v=(i,y^-)$, and hence $\tau\cdot(j,x^-)=(j,y)$ and $\tau\cdot(i,y^-)=(i,x)$, where $\tau=\tau_0^{u_0}\tau_1^{u_1}\cdots\tau_{n-1}^{u_{n-1}}$. 
If $(x^-)_{m-j}=y_{m-j}$ and $(y^-)_{m-i} \neq x_{m-i}$, then since $\tau_m$ flips the $(m-j)$-th bit of the bitstrings in $\F_j$, $u_m=0$. However, $\tau_m$ flips the $(m-i)$-th bit of bitstrings in $\F_i$, so $u_m=1$, a contradiction. A completely analogous argument works if $(x^-)_{m-j}\neq y_{m-j}$ and $(y^-)_{m-i}= x_{m-i}$. Thus, $\delta=\rho^s$.

Since $\rho^s\cdot u=(i+s,x)$ and $\rho^s\cdot v=(j+s,y)$, and $\tau$ fixes every fibre, $i+s=j$ and $j+s=i$. 
Since we are assuming $i\neq j$, $s=\frac{n}{2}$, so $j=i+\frac{n}{2}$. 
Let $m\in M$, and assume $x_{m-j}=y_{m-j}$. Since $\tau\cdot(j,x)=(j,y)$, $\tau$ must not flip the $(m-j)$-th bit of the bitstrings of $\F_{j}$, so $u_m=0$. That means that $\tau$ must also not flip the $(m-i)$-th bit of the bitstrings of $\F_i$, so since $\tau\cdot(i,y)=(i,x)$, $y_{m-i}=x_{m-i}$. A completely analogous argument works if we assume $y_{m-i}=x_{m-i}$. Thus condition (3) holds.

Conversely, we will show that if one of (1), (2) or (3) holds,
then $u$ and $v$ are interchangeable by some $\alpha\in\mathcal{A}$.
First, assume (1) holds, so $j=i$. 
As noted in the paragraph before the statement of this lemma, there is some $\tau \in K \subset \mathcal A$ that interchanges $u$ and $v$.

Next, assume (2) holds, so $j\neq i$ and for all $m\in M$, $(x^-)_{m-j}=y_{m-j}$ if and only if $(y^-)_{m-i}=x_{m-i}$. 
Let $\tau=\tau_0^{u_0}\tau_1^{u_1}\cdots\tau_{n-1}^{u_{n-1}}\in K$, where for each $s\in\Z_n$, $u_s=1$ if $(x^-)_{s-j}\neq y_{s-j}$ and $(y^-)_{s-i}\ne x_{s-i}$ and $u_s=0$ otherwise. 
Hence, in $\F_j$, $\tau$ flips the bits in every position that $x^-$ and $y$ differ and no others, and in $\F_i$, $\tau$ flips the bits in every position that $y^-$ and $x$ differ and no others.
By Lemma \ref{lem:ijReflec}, there exists $\mu_s\in\langle\rho,\mu\rangle$ such that $\mu_s\cdot u=(j,x^-)$ and $\mu_s\cdot v=(i,y^-)$.
Let $\alpha=\tau\mu_s\in\mathcal{A}$. Then 
$\alpha\cdot u = \tau\cdot(\mu_s\cdot u)= \tau\cdot(j,x^-)=(j,y),$
and
$\alpha\cdot v = \tau\cdot(\mu_s\cdot v)=\tau\cdot(i,y^-)=(i,x),$
so $u$ and $v$ are interchangeable by $\alpha\in\mathcal{A}$.

Lastly, assume (3) holds, so $j=i+\frac{n}{2}$ and for all $m\in M$, $x_{m-j}=y_{m-j}$ if and only if $y_{m-i}=x_{m-i}$.
Let $\tau=\tau_0^{u_0}\tau_1^{u_1}\cdots\tau_{n-1}^{u_{n-1}}\in K$, where for each $s\in\Z_n$, $u_s=1$ if $x_{s-j}\neq y_{s-j}$ and $u_s=0$ otherwise. By assumption, that also means that $u_s=1$ if $y_{s-i}\neq x_{s-i}$ and $u_s=0$ otherwise. Hence, in both $\F_i$ and $\F_j$, $\tau$ flips the bits in every position that $x$ and $y$ differ, and no others. It is straighforward to verify that $\alpha=\tau\rho^{n/2}\in\mathcal{A}$ interchanges $u$ and $v$.
\end{proof}

For example, let $u=(i,x)=(0,101),v=(j,y)=(1,001)\in V(\px(5,3))$. Then $M=\{0,1,2\}\cap\{1,2,3\}=\{1,2\}$. Since $x^-=101$ and $y^-=100$, for $m=1$, $(x^-)_{m-j}=1\neq 0=y_{m-j}$ and $(y^-)_{m-i}=0=x_{m-i}$. Hence, by Lemma \ref{lem:interchangeable}, $u$ and $v$ are not interchangeable.

Note that it is possible for a pair of vertices to be interchangeable by two different automorphisms. For example,  $z = (0, 000)$ and $v=(5,000)$ in $\px(10,3)$ satisfy both conditions (2) and (3), and so can be interchanged using either a rotation or a reflection.

We will find it useful in the next section to identify which vertices of $\px(n,k)$ are interchangeable with $z=(0, 00\cdots 0)$.

\begin{cor}\label{cor:zInterchangeable}
Let $v=(j,y)\in V(\px(n,k))$ and let $$M = \big\{0,1,2,\dots,k-1\big\}\cap\big\{j,j+1,j+2,\dots,j+k-1\big\}.$$
Then $z=(0,00\cdots0)$ and $v$ are interchangeable by some $\alpha \in \mathcal A$
if and only if one of the following holds: 
\begin{enumerate}[(1)]
    \item $j=0$,
    \item $j \neq 0$ and for all $m\in M$, $y_{m-j}=(y^-)_m= y_{k-1-m}$, 
    \item $j=\frac{n}{2}$ and for all $m\in M$, $y_{m-j}= y_m$.
\end{enumerate}
\end{cor}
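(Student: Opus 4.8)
The plan is to derive Corollary~\ref{cor:zInterchangeable} directly from Lemma~\ref{lem:interchangeable} by substituting $u = z = (0, 00\cdots 0)$, so that $i = 0$ and $x = 00\cdots 0$, and simplifying the three conditions. First I would record that with $x = \mathbf 0$ we have $x^- = \mathbf 0$ as well, and that the intersection $M$ defining the overlap specializes to $\{0, 1, \dots, k-1\} \cap \{j, j+1, \dots, j+k-1\}$ exactly as stated. The only subtlety in matching up the indices is that the lemma refers to bits $x_{m-i}$, $(y^-)_{m-i}$, etc., and with $i = 0$ these become $x_m$, $(y^-)_m$, so I must keep careful track of which subscripts are taken $\bmod\, k$.

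For condition (2) of the lemma, I would substitute $i=0$ and $x = \mathbf 0$ into the biconditional ``$(x^-)_{m-j}=y_{m-j}$ if and only if $(y^-)_{m-i}=x_{m-i}$.'' Since $(x^-)_{m-j} = 0$, the left side ``$(x^-)_{m-j}=y_{m-j}$'' becomes simply ``$y_{m-j}=0$''; since $x_m = 0$, the right side ``$(y^-)_{m-i}=x_{m-i}$'' becomes ``$(y^-)_m=0$,'' i.e. $y_{k-1-m}=0$ using the definition $(y^-)_m = y_{k-1-m}$. So for each $m \in M$ the biconditional says $y_{m-j}=0 \iff y_{k-1-m}=0$, which (since both bits live in $\mathbb Z_2$) is equivalent to the equality $y_{m-j} = (y^-)_m = y_{k-1-m}$ asserted in the corollary's condition (2). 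I would spell out this step-by-step collapse of a biconditional between two ``equals zero'' statements into a single chain of equalities, which is the one place a reader might want the bookkeeping shown explicitly.

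Condition (3) is the easiest: substituting $i = 0$, $j = \frac{n}{2}$, and $x = \mathbf 0$ into ``$x_{m-j}=y_{m-j}$ if and only if $y_{m-i}=x_{m-i}$'' turns the two sides into ``$y_{m-j}=0$'' and ``$y_m=0$,'' and again a biconditional between two ``equals zero'' statements over $\mathbb Z_2$ is equivalent to the single equality $y_{m-j}=y_m$ claimed in the corollary. Condition (1), $j=i$, becomes $j=0$ with no further work. The main (and really only) obstacle I anticipate is the index arithmetic: being careful that subscripts on bits are interpreted in $\mathbb Z_k$ per the paper's convention, so that $(y^-)_m = y_{k-1-m}$ is applied with the correct reduction and the simplification of condition (2) reads as $y_{m-j}=(y^-)_m=y_{k-1-m}$ rather than a mis-indexed variant. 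Since the corollary is a direct specialization, no new automorphism-construction or case analysis is needed beyond what Lemma~\ref{lem:interchangeable} already supplies.
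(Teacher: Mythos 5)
Your proposal is correct and matches the paper's treatment: the paper states this as an immediate corollary of Lemma~\ref{lem:interchangeable} (giving no separate proof), obtained exactly by the specialization $u=z=(0,00\cdots0)$, $i=0$, $x=\mathbf{0}$, with the biconditionals collapsing to equalities of bits in $\mathbb{Z}_2$ as you describe. Your explicit bookkeeping of the index arithmetic, including $(y^-)_m = y_{k-1-m}$ with subscripts read in $\mathbb{Z}_k$, is exactly the verification the paper leaves to the reader.
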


To illustrate how vertex interchangeability can be used to compute symmetry parameters, we consider the smallest twin-free Praeger-Xu graph.

\begin{lem}\label{lem:3,2interchange}
Any two distinct vertices of $\px(3,2)$ are interchangeable.
\end{lem}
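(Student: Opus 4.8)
The plan is to verify the interchangeability conditions of Corollary~\ref{cor:zInterchangeable} (and more generally Lemma~\ref{lem:interchangeable}) directly, exploiting the fact that $\px(3,2)$ is a very small graph. It has $3 \cdot 2^2 = 12$ vertices, organized into three fibres $\F_0, \F_1, \F_2$, each containing the four bitstrings $00, 01, 10, 11$. Since $n=3$ is odd, there are no antipodal fibres, so condition (3) of the lemma never applies; only conditions (1) and (2) are relevant.

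\medskip

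First I would dispatch the \emph{same-fibre} case. If $u = (i,x)$ and $v = (i,y)$ lie in the same fibre, then condition (1) of Lemma~\ref{lem:interchangeable} holds automatically with $j = i$, so $u$ and $v$ are interchangeable by an element of $K$ (flip exactly the bits where $x$ and $y$ differ). This handles all $\binom{4}{2} = 6$ pairs within each of the three fibres with no computation.

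\medskip

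The main work is the \emph{different-fibre} case, and I expect this to be where the argument actually lives. Here $j \neq i$, and since $n=3$ is odd, I must verify condition (2): for every $m$ in the overlap set $M = \{i, i+1\} \cap \{j, j+1\}$ (indices mod $3$), the biconditional $(x^-)_{m-j} = y_{m-j} \iff (y^-)_{m-i} = x_{m-i}$ holds. The convenient feature of $k=2$ is that reversal is an involution on a $2$-bit string that merely swaps the two bits: $(ab)^- = ba$. Thus for $m - i \in \{0,1\}$ one has $(x^-)_{m-i} = x_{1-(m-i)}$, and the condition reduces to comparisons among individual bits of $x$ and $y$. Because $|M| \le 2$ and each bit lies in $\Z_2$, the biconditional in condition (2) is a statement about a bounded number of bit-equalities. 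The cleanest route is to observe that for $k=2$ and $n=3$, whenever $i \neq j$ the overlap $M$ is a \emph{single} index (each of $\{i,i+1\}$ and $\{j,j+1\}$ has size $2$, they are distinct as subsets of $\Z_3$, and two distinct $2$-subsets of a $3$-set meet in exactly one element), so condition (2) need only be checked at one value $m$. At that single $m$, I would show that the required biconditional is automatically satisfied: both sides of the ``if and only if'' compare a bit of $x$ against a bit of $y$ in a symmetric way, and the reversal structure forces the two statements to coincide.

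\medskip

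The anticipated obstacle is purely bookkeeping: correctly tracking the index shifts $m-i$ and $m-j$ modulo $3$ together with the bit-reversal on length-$2$ strings, and confirming that the single overlap index always yields a true biconditional rather than a genuine constraint that could fail. To keep this honest rather than hand-wavy, I would either (a) fix representatives using vertex-transitivity, reducing to $u = z = (0,00)$ and invoking Corollary~\ref{cor:zInterchangeable} so that condition (2) becomes $y_{m-j} = y_{k-1-m}$ at the unique $m \in M$, then check the finitely many target vertices $v = (1,y)$ and $v=(2,y)$; or (b) present a short case table over the (at most $3 \cdot \binom{4}{2}$, up to symmetry far fewer) distinct-fibre pairs, exhibiting for each the witnessing $\delta = \mu_s$ and the flip-vector $\tau$. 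Approach (a) is preferable because vertex-transitivity of $\px(n,k)$ collapses all same-orbit computations to a single base point, leaving only a handful of bit-equality checks at one index $m$, which are immediate from $(y^-)_m = y_{1-m}$ when $k=2$.
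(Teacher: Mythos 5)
Your proposal is correct and its preferred route (a) is exactly the paper's proof: reduce by vertex-transitivity to $z=(0,00)$, invoke Corollary~\ref{cor:zInterchangeable} (condition (3) being vacuous since $n=3$ is odd), and observe that for $j=1$ and $j=2$ the unique $m\in M$ makes the condition $y_{m-j}=y_{k-1-m}$ trivially true ($y_0=y_0$ and $y_1=y_1$, respectively). The paper simply carries out these two one-line checks explicitly, which is the bookkeeping you flagged as the remaining work.
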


\begin{proof}
    By vertex-transitivity, it suffices to show that $z = (0,00)$ is interchangeable with any vertex $v = (j,y)$. Since $n = 3$ is odd, we need only check that condition (2) of  Corollary~\ref{cor:zInterchangeable} holds. If $j = 1$, then $M=\{1\}$ and $y_{m-j} = y_{k-1-m} = y_0$. If $j = 2$, then $M =\{0\}$ and $y_{m-j} = y_{k-1-m} = y_1$.
\end{proof}

\begin{thm}\label{thm:dist3,2}
$\dist(\px(3,2))=2$ and $\rho(\px(3,2))=3$.
\end{thm}
\begin{proof}
Color the vertices in $R=\{(0,00), (1,01), (2,00)\}$ red and every other vertex blue. Assume $\alpha=\tau\delta\in\mathcal{A}=\aut(\px(3,2))$ preserves colors, where  $\tau=\tau_0^{u_0}\tau_1^{u_1}\tau_2^{u_2}$.

Suppose $\delta=\rho^s$ for some $s\in\{1,2\}$. If $s=1$, then  $\delta\cdot(1,01) =(2,01)$.
Since $\alpha$ preserves colors, $\tau\cdot (2,01)=(2,00)$,  which implies $u_0=1$ and $u_2 = 0$. However, then  $\tau\delta\cdot(2,00)= \tau\cdot(0,00)=(0,1a)$ for some $a\in\Z_2$. This contradicts our assumption that $\alpha$ preserves colors. If $s=2$, then $\delta\cdot(1,01)=(0,01)$, and hence $u_1=1$  and $u_0=0$. A similar contradiction arises because $\delta\cdot(2,00)=(1,00)$, and $u_1=1$ means $\tau\cdot(1,00)=(1,1a)$ for some $a\in\Z_2$. Thus $\delta$ cannot be a nontrivial rotation.

Suppose $\delta=\mu_s$ for some $s \in \mathbb Z_3$. By Lemma~\ref{lem:ijReflec}, $\delta$ preserves one fibre and interchanges the other two.
If $\delta$ preserves $\F_0$ and interchanges $\F_1$ and $\F_2$, then $\delta\cdot (1,01)=(2,10)$. Since $\alpha$ preserves colors, $\tau\cdot(2,10)=(2,00)$, so $u_2=1$. However, $\delta\cdot(2,00)=(1,00)$, which implies $\tau\cdot(1,00)=(1,1a)$ for some $a\in\Z_2$, a contradiction. 
Similar arguments apply to the remaining two cases.
Since $\delta$ is neither a nontrivial rotation nor a reflection, $\delta = \text{id}$.
 
Since $\alpha$ preserves colors and fibres, $\tau$ preserves the one red vertex in each fibre, so by Lemma~\ref{lem:tauPreservesEven},
$u_0=u_1=u_2 = 0$. Hence, $\alpha=\tau=\text{id}$, so this is a $2$-distinguishing coloring with smaller color class of size $3$. Thus, $\dist(\px(3,2))=2$ and $\rho(\px(3,2))\leq3$. 

To show $\rho(\px(3,2)> 2$, let $R=\{u,v\}\subset V(\px(3,2))$. Color the vertices in $R$ red and every other vertex blue. By Lemma~\ref{lem:3,2interchange}, 
$u$ and $v$ are interchangeable; any automorphism interchanging them is a nontrivial color-preserving automorphism.
Thus, $\rho(\px(3,2))> 2$, so $\rho(\px(3,2))=3$.
\end{proof}

\section{Distinguishing $\px(n,k)$, $k \ge 2$}\label{sec:DistNoTwins}

We have already found the distinguishing parameters for a number of Praeger-Xu graphs. Theorem~\ref{thm:DetkIsOne} covers the case $k=1$; Theorem~\ref{thm:dist3,2} covers $\px(3,2)$ and Proposition~\ref{prop:PX42} covers $\px(4,2)$. The next result covers the exceptional case $\px(4,3)$. The remainder of this section covers the case $n \ge 5$ and $k \ge 2$.

\begin{thm}\label{thm:dist43}
 $\dist(\px(4,3)) = 2$ and $\rho(\px(4,3))= 3 = \lceil \frac{4}{3}\rceil +1$.   
\end{thm}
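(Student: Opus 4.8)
The plan is to prove two separate bounds, mirroring the structure of Theorem~\ref{thm:dist3,2}: exhibit an explicit $2$-distinguishing coloring with a red color class of size $3$ to establish $\dist(\px(4,3)) = 2$ and $\rho(\px(4,3)) \le 3$, and then rule out every red class of size $2$ to establish $\rho(\px(4,3)) > 2$. The main complication, which did not arise in the $\px(3,2)$ case, is that $\px(4,3)$ is one of the two exceptional graphs where $\mathcal A = K \rtimes D_n$ is a proper (index $2$) subgroup of $\aut(\px(4,3))$. So every argument about color-preserving automorphisms must account not only for $\alpha = \tau\delta \in \mathcal A$ but also for the coset $\mathcal A \xi$, where $\xi$ is the extra automorphism from Table~\ref{tab:2CyclesInXi} whose $2$-cycles and fixed points are recorded in the proof of Proposition~\ref{prop:detFourThree}.

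For the upper bound, I would first choose a candidate red set $R$ of three vertices and color the rest blue. A natural choice is to pick vertices spread across non-antipodal fibres, e.g.\ one vertex each in $\F_0$, $\F_1$, $\F_2$, with bitstrings chosen to break the residual symmetries; I would likely reuse the flavor of $\{(0,00),(1,01),(2,00)\}$ from Theorem~\ref{thm:dist3,2}, adapting the bitstrings to length $3$. Given such $R$, suppose $\alpha$ preserves the color classes. If $\alpha \in \mathcal A$, then $\alpha = \tau\delta$; I force $\delta = \mathrm{id}$ by noting the induced action on fibres must fix the fibres hit by $R$ (chosen non-antipodal so no reflection or nontrivial rotation is possible), and then I invoke Lemma~\ref{lem:tauPreservesEven} on each singleton $R \cap \F_i$ to force $\tau = \mathrm{id}$, exactly as in the earlier proof. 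The genuinely new work is the case $\alpha \in \mathcal A\xi$: I must verify that no element of the form $\beta\xi$ preserves $R$. Using the explicit $2$-cycle/fixed-point data for $\xi$ together with the half-fibre block structure it respects, I would check directly that $\xi$ moves some red vertex to a blue one, and more generally that the bitstring positions where $R$ sits are incompatible with the palindromic/nonpalindromic block action of any $\beta\xi$, echoing the final coset argument in Proposition~\ref{prop:detFourThree}.

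For the lower bound $\rho(\px(4,3)) > 2$, I would show that every two-element set $R = \{u,v\}$ admits a nontrivial color-preserving automorphism, i.e.\ that $u$ and $v$ are interchangeable by some automorphism fixing the (trivially preserved) blue class setwise. Since $\px(4,3)$ is vertex-transitive, it suffices by the usual reduction to take $u = z = (0,000)$ and let $v = (j,y)$ range over all remaining vertices, then apply Corollary~\ref{cor:zInterchangeable} to decide interchangeability within $\mathcal A$. The subtlety is that Corollary~\ref{cor:zInterchangeable} only captures interchangeability by elements of $\mathcal A$; the few vertices $v$ for which none of conditions (1)–(3) hold must be handled using $\xi$. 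Here the fixed-point list of $\xi$ is exactly the tool: if $v$ is one of the remaining problematic vertices, I expect it to pair up under one of the $2$-cycles of some conjugate of $\xi$, giving the required interchanging automorphism outside $\mathcal A$.

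The hard part will be the $\mathcal A\xi$ coset analysis in both directions. For the upper bound it requires confirming that the specific $R$ I select is not preserved by any $\beta\xi$, which depends delicately on the bitstring entries and cannot be read off from $D_n$ and Lemma~\ref{lem:tauPreservesEven} alone; for the lower bound it requires showing that the vertices missed by Corollary~\ref{cor:zInterchangeable} are nonetheless interchangeable via $\xi$, so that no two-vertex red set survives. I would manage both by leaning on the concrete description of $\xi$ as a product of transpositions respecting the half-fibre blocks, reducing each coset check to a finite, explicit verification over the $32$ vertices. Once both bounds are in place, $\rho(\px(4,3)) = 3 = \lceil \tfrac{4}{3}\rceil + 1$ follows, consistent with the ``otherwise'' row of Table~\ref{tab:summary}.
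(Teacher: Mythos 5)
The main gap is in your upper bound, at the step where you ``force $\delta=\mathrm{id}$ by noting the induced action on fibres must fix the fibres hit by $R$ (chosen non-antipodal so no reflection or nontrivial rotation is possible).'' This fails for $n=4$ on two counts. First, your proposed fibres $\F_0,\F_1,\F_2$ are not pairwise non-antipodal: in $\px(4,3)$ the antipodal pairs are $(\F_0,\F_2)$ and $(\F_1,\F_3)$. Second, and unavoidably, every $3$-element subset of $\Z_4$ is invariant under the reflection of $C_4$ through the omitted vertex; for $I=\{0,1,2\}$ the reflection fixing $1$ and $3$ swaps $0$ and $2$ and preserves $I$ (the same holds for the paper's choice $I=\{0,2,3\}$). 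So fibre positions alone can never force $\delta=\mathrm{id}$ here, and Lemma~\ref{lem:tauPreservesEven} cannot be invoked until that reflection is eliminated; this requires an argument about the bitstring entries (as in the proof of Theorem~\ref{thm:dist3,2}) or about adjacency. The paper does the latter: in its red set $R=\{(0,000),(2,000),(3,001)\}$, the vertices $(2,000)$ and $(3,001)$ are adjacent while $(0,000)$ is adjacent to neither, so any color-preserving $\beta$ must fix $(0,000)$; the only reflection whose induced action fixes $\F_0$ also fixes $\F_2$ and swaps $\F_1\leftrightarrow\F_3$, so it cannot preserve the red fibres, whence $\delta=\mathrm{id}$. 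The paper then finishes without Lemma~\ref{lem:tauPreservesEven} at all: since $\delta=\mathrm{id}$ and each fibre holds at most one red vertex, $\beta$ fixes $R$ pointwise, and $R$ contains the determining set $\{(0,000),(3,001)\}$ of Proposition~\ref{prop:detFourThree}, so $\beta=\mathrm{id}$ --- an argument that simultaneously disposes of both cosets.

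Your second soft spot is the $\mathcal A\xi$ check in the upper bound, which you leave as an unspecified finite verification. The paper's version is a one-line block argument you could adopt: $\xi$ maps its $R$ to $\{(0,000),(0,001),(1,010)\}$, which has two vertices in $\F_0$; since fibres are blocks for $\mathcal A$, no $\alpha\xi$ with $\alpha\in\mathcal A$ can carry this back to a set with one vertex in each of three distinct fibres. By contrast, your lower bound is sound and is exactly the paper's: reduce by vertex-transitivity to interchangeability with $z=(0,000)$, use Corollary~\ref{cor:zInterchangeable} inside $\mathcal A$, and handle the eight exceptional vertices with explicit elements of the coset $\mathcal A\xi$ (the paper's Table~\ref{tab:interchangingAlphas}). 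One caution there: those interchanging elements are of the form $\alpha\xi$ and need not be conjugates of $\xi$ as you anticipate, so you should search the coset rather than the conjugacy class; your proposed exhaustive verification would find them. With the reflection step repaired along the lines above, your plan goes through.
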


\begin{proof} 
Color the vertices in $R = \{(0,000), (2,000), (3,001)\}$ red and all other vertices blue. 
Suppose $\beta \in \aut(\px(4,3))$ preserves this coloring. 
Recall that $\aut(\px(4,3))$ can be partitioned into the cosets $\mathcal A$ and $\mathcal A \xi$. First
assume $\beta = \alpha \xi$ for some $\alpha \in \mathcal A$. Then by assumption, 
\[
 R = \beta\big ( \{ (0,000),  (2,000),  (3,001)\}\big )
= \alpha(\{ (0,000), (0,001), (1,010)\}).
\]
Note that $R$ contains vertices in three different fibres, but 
since the fibres form a block system for any   $\alpha \in \mathcal A$,  
$\alpha(\{(0,000), (0,001), (1,010)\})$ contains two vertices in one fibre and a third vertex in a different fibre. So these two sets cannot be equal. Hence $\beta \notin \mathcal A \xi$.

Thus $\beta \in \mathcal A$, so $\beta = \tau \delta$ for some $\delta \in \langle \rho, \mu\rangle$.
Note that $(2,000)$ and  $(3,001)$ are adjacent, but neither is adjacent to $(0,000)$. Thus $\beta$ fixes $(0,000)$. If the induced action of $\beta$ on the fibres fixes $\F_0$, then either $\delta =$ id or $\delta = \mu$.  Since $\mu$  does not interchange fibres $\F_2$ and $\F_3$, $\delta = $ id. Thus $\beta$ fixes every vertex in $R$, which contains $ \{(0,000), (3,001)\}$, the determining set for $\px(4,3)$ found in Proposition~\ref{prop:detFourThree}. Hence $\beta = \text{id}$.
Thus this is  a 2-distinguishing coloring, proving that $\dist(\px(4,3)) = 2$.

Next we show that we cannot create a 2-distinguishing coloring with  fewer red vertices. If $R = \{(i, x)\}$, then $\tau_{i-1}$ is a nontrivial automorphism preserving the coloring. To show that no two-element set of red vertices provides a distinguishing coloring, it suffices, by vertex transitivity, to show that every vertex in $\px(4.3)$ is interchangeable with $z=(0,000)$.
Corollary~\ref{cor:zInterchangeable} shows that
 $z=(0,000)$ is interchangeable with every vertex in $\px(4,3)$ by some $\alpha \in \mathcal A$ except those listed below:
\begin{equation}
(1,010), (1,011), (1,100), (1,101), (3,010),(3,110),(3,001), (3,101).
\tag{*}
\end{equation}
For each vertex (*), we can find $\alpha \in \mathcal A$ such that $\alpha \xi$ interchanges it with $(0,000)$. For $(1,010)$, we seek $\alpha \in \mathcal A$ that satisfies
$\alpha (\xi \cdot (0,000)) = (1,010)$ and $\alpha (\xi \cdot (1,010)) = (0,000)$.
Referring to Table~\ref{tab:2CyclesInXi} for the action of $\xi$, we seek $\alpha \in \mathcal A$ such that 
$\alpha \cdot (0,000) = (1,010)$  and $\alpha \cdot (3,001)) = (0,000)$.
It is easy to verify that $\alpha = \tau_2 \rho$ satisfies this condition. For each vertex $v$ in (*), Table~\ref{tab:interchangingAlphas} gives an $\alpha$ satisfying $\alpha\xi \cdot (0,000) =\alpha\cdot (0,000)= v$ and $\alpha\xi\cdot v = (0,000)$. 
\end{proof}

\renewcommand{\arraystretch}{1.2}
\begin{table}[h]
     \centering
     \begin{tabular}{|c|c|c||c|c|c|}
     \hline
      $v$ & $\xi \cdot v$ & $\alpha$  & $v$ & $\xi \cdot v$ & $\alpha$ \\
      \hline
      $(1,010)$ & $(3,001)$ & $\tau_2 \rho$ &   $(3,010)$ & $(3,101)$ & $\tau_0 \tau_2 \mu_1$\\
      \hline
       $(1,011)$ & $(1,100)$ & $\tau_2 \tau_3 \mu_3$  & $(3,110)$ & $(1,101)$ & $\tau_0 \tau_2 \tau_3 \rho^3$\\
      \hline
       $(1,100)$ & $(1,011)$ & $\tau_0 \tau_1 \mu_3$  &  $(3,001)$ & $(1,010)$ & $\tau_1 \rho^3$\\
      \hline
       $(1,101)$ & $(3,110)$ & $\tau_0 \tau_1  \tau_3 \rho$ &   $(3,101)$ & $(3,010)$ & $\tau_1 \tau_3 \mu_1$\\
      \hline
     \end{tabular}
     \caption{$\alpha \in \mathcal A$ such that $\alpha\xi$ interchanges $(0,000)$ and $v$ in $(*)$.}
      \label{tab:interchangingAlphas}
     \end{table}

\begin{thm}
\label{thm:2DistCostBounds}
Let $n\geq5$ and $k \ge 2$. Then $\dist(\px(n,k))=2$ and\\ $\lceil \frac{n}{k}\rceil \le \rho(\px(n,k))\leq\lceil\frac{n}{k}\rceil+1$. 
\end{thm}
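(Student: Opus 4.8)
The plan is to prove the claim in three parts: first establish the lower bound $\rho(\px(n,k)) \ge \lceil \frac{n}{k}\rceil$, then construct a $2$-distinguishing coloring with $\lceil\frac{n}{k}\rceil + 1$ red vertices (which simultaneously proves $\dist(\px(n,k)) = 2$ and the upper bound on cost), and finally confirm $\dist = 2$ rather than some larger value. Since $\rho$ is only defined for $2$-distinguishable graphs, the distinguishing and cost arguments are intertwined, and the upper bound construction does the heavy lifting for both.

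For the lower bound, I would argue exactly as in the determining-number proof of Theorem~\ref{thm:DetnNot4}. Suppose $R$ is a red color class in a $2$-distinguishing coloring with $|R| = \lceil\frac{n}{k}\rceil - 1$. Let $I_R = \{i : R \cap \F_i \ne \emptyset\}$ be the indices of fibres meeting $R$. If any gap between consecutive occupied fibres has $k$ or more fibres, say fibres $\F_{i_p+1}, \dots, \F_{i_{p+1}-1}$ with $i_{p+1} - i_p - 1 \ge k$, then the automorphism $\tau_{i_{p+1}-1} \in K$ acts nontrivially only on fibres inside this gap and hence fixes $R$ pointwise (in particular setwise), contradicting that the coloring is distinguishing. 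So every gap has at most $k-1$ fibres, giving $|I_R| \cdot k \le |R| \cdot k = (\lceil\frac{n}{k}\rceil - 1)k < n$, a contradiction since the fibres are covered. This yields $\rho(\px(n,k)) \ge \lceil\frac{n}{k}\rceil$.

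For the upper bound and $\dist = 2$, I would exhibit an explicit red set $R$ of size $\lceil\frac{n}{k}\rceil + 1$ placing one red vertex in each of roughly $\lceil\frac{n}{k}\rceil$ evenly spaced fibres plus one extra, chosen so that the bitstring components break all remaining symmetry. The natural template generalizes $\px(3,2)$ from Theorem~\ref{thm:dist3,2}: spread red vertices across fibres $\F_0, \F_k, \F_{2k}, \dots$ so that no nontrivial $\tau_s$ fixes $R$ (using Lemma~\ref{lem:tauPreservesEven}, since each occupied fibre contains an odd number, namely one, of red vertices), while using an asymmetric placement and bitstring choice to kill the dihedral part $\delta$. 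Concretely, after showing a color-preserving $\alpha = \tau\delta$ must have $\delta = \text{id}$ (because the induced fibre action must fix or permute the red-occupied fibres, and the spacing is arranged to be rigid under $D_n$), Lemma~\ref{lem:tauPreservesEven} forces $\tau = \text{id}$. The extra $(+1)$ vertex is what guarantees the fibre-index pattern admits no nontrivial rotation or reflection.

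The main obstacle will be the $\delta = \text{id}$ step: ruling out all nontrivial rotations and reflections of $C_n$ that could map the set of red-occupied fibres to itself while a compensating $\tau$ corrects the bitstrings. This is precisely where interchangeability (Corollary~\ref{cor:zInterchangeable}) and the reflection bookkeeping of Lemma~\ref{lem:ijReflec} become essential, and where the exact placement of the red vertices and their bitstrings must be chosen with care — a rotation by $k$ fibres permutes the occupied fibres cyclically, so the asymmetry must come from the irregular final gap (of size $n \bmod k$) together with the bitstring values. I anticipate this argument splits into subcases according to the residue of $n$ modulo $k$ and the parity of $n$ (for reflections fixing fibres versus swapping antipodal pairs, per Lemma~\ref{lem:ijReflec}(2)), and getting a single uniform coloring to handle all these cases is the delicate part; it may be cleaner to specify the coloring by a case analysis on $n \bmod k$ while keeping the verification template uniform.
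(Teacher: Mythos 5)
Your lower bound is correct, and it is essentially the paper's argument in inlined form: the paper simply observes that the red class of any $2$-distinguishing coloring is automatically a determining set (an automorphism fixing it pointwise preserves the color classes), so $\rho(\px(n,k)) \ge \det(\px(n,k)) = \lceil\tfrac{n}{k}\rceil$ by Theorem~\ref{thm:DET}, whereas you re-run the gap-counting argument from Theorem~\ref{thm:DetnNot4} directly. Either way this part is sound.

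The genuine gap is in the upper bound, which is the heart of the theorem (it proves both $\dist(\px(n,k))=2$ and $\rho \le \lceil\tfrac{n}{k}\rceil+1$). You correctly identify the paper's template --- one red vertex in each of $\F_0, \F_k, \F_{2k}, \dots$, one extra red vertex in $\F_1$, and Lemma~\ref{lem:tauPreservesEven} to force $\tau=\text{id}$ once $\delta=\text{id}$ --- but you never specify the bitstrings and you explicitly defer the step of killing the dihedral part, which is exactly where the work lies. Worse, your guiding claim that ``the extra $(+1)$ vertex is what guarantees the fibre-index pattern admits no nontrivial rotation or reflection'' is wrong for reflections: the extra vertex kills rotations (a nontrivial rotation cannot preserve the unique nontrivial path component of the occupied-index set in $C_n$), but reflections preserving the occupied fibres genuinely exist and must be defeated by the bitstring choices. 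Concretely, with the natural coloring (all-zeros at fibres $0,k,2k,\dots$ and all-ones at fibre $1$), take $k=2$ and $n$ even: the occupied indices are $\{0,1,2,4,\dots,n-2\}$ and the reflection $\mu_3\cdot(i,x)=(2-i,x^-)$ fixes $(1,11)$, swaps $(0,00)\leftrightarrow(2,00)$, and permutes the remaining red vertices $(2j,00)$ among themselves --- so $\mu_3$ preserves the coloring and the construction fails outright. The paper repairs this by moving the all-ones bitstring to fibre $0$ in precisely this case, and in the remaining cases (e.g.\ $\mu_k$ swapping fibres $0$ and $1$ when $k>2$, which can preserve the index set) it derives a contradiction from the forced bit-flips $u_0=\cdots=u_k=1$ against a red vertex that must be fixed elsewhere. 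So what you set aside as ``the delicate part'' is not a uniform verification but a case analysis that includes a counterexample to your proposed coloring; without it, the proposal is a plan rather than a proof.
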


\begin{proof}
Let $x = 00\cdots 0, y = 11\cdots 1 \in \mathbb Z_2^k$. 
Then let \[
 R = \big\{(ik, x) : i \in \{0, 1, \dots, \lceil\tfrac{n}{k} \rceil -1\}\big\} \cup \big\{(1, y)\big\}.
\]
Color the vertices in $R$ red and all other vertices blue. Assume $\alpha=\tau\delta
\in\aut(\px(n,k))$ preserves these color classes. 
Then the induced action of $\delta$ on the fibres must preserve the set $I=\{0, 1, k, 2k, \dots, (\lceil\tfrac{n}{k} \rceil -1)k\} \subset \mathbb Z_n$. 
Note that $|R|= |I| = \lceil\tfrac{n}{k} \rceil +1 < n$. Interpreting $\mathbb Z_n$ as the vertex set of the cycle $C_n$, the (non-spanning) subgraph of $C_n$ induced by $I$ consists of a path containing at least the vertices $0$ and $1$, and possibly some isolated vertices. Let $F\subset \mathbb Z_n$ denote the set of vertices in the path; these will be the indices corresponding to a set of adjacent fibres of $\px(n,k)$ containing red vertices. Note that the action of $\delta$ on $C_n$ must preserve $F$. Since no nontrivial rotation preserves a proper subpath of $C_n$, $\delta \neq \rho^s$ for any $0 \neq s\in\Z_n$. So assume $\delta=\mu_s$ for some $s\in\Z_n$, and as usual, $\tau= \tau_0^{u_o} \cdots \tau_{n-1}^{u_{n-1}}$.

First assume $k=2$. If $n \ge 5$ is odd, then  $I= \{0, 1, 2, 4, \dots, n-1\}$ and $F=\{n-1,0,1,2\}$. For $F$ to be preserved under reflection, $\mu_s$ must interchange the vertex pairs $\{0, 1\}$ and $\{n-1, 2\}$ in $C_n$. By Lemma~\ref{lem:ijReflec}, $s  = 2$. 
In $\px(n,2)$, $\alpha = \tau \mu_2$ must interchange the vertex pairs $\{(0,00),(1,11)\}$ and $\{(n-1,00), (2,00)\}$.
Thus $\tau\cdot(0,11)=(0,00)$ and $\tau\cdot(1,11)=(1,00)$, which implies
 $u_0=u_1=u_2=1$. However, it must also be the case that $\tau\cdot(n-1,00)=(n-1,00)$, so $u_0=0$, a contradiction.

If instead $n\ge 5$ is even, then $I = \{0, 1, 2, 4, \dots, n-2\}$ and $F=\{0,1,2\}$. In this case, $\mu_s$ must fix $1$ and interchange $0$ and $2$, so by Lemma~\ref{lem:ijReflec}, $s=3$. In this case, as an element of $\aut(\px(n,2))$, $\mu_3$ is a nontrivial automorphism preserving $R$, so this does not define a $2$-distinguishing coloring. However, let \[
 R' = \{(0, y)\} \cup \big\{(ik, x) : i \in \{1, \dots, \lceil\tfrac{n}{k} \rceil -1\}\big\} \cup \big\{(1, x)\big\}.
\] 
Then $I'=I = \{0, 1, 2, 4, \dots, n-2\}$ and $F'=F=\{0, 1, 2\}$. The only reflection preserving $F'$ is still $\mu_3$. 
If $\alpha = \tau\mu_3$ preserves $R'$, 
then $\tau\mu_3\cdot (0,11) = \tau\cdot(2,11) = (2, 00)$, which means $u_2 = u_3 = 1$. 
Also, $\tau\mu_3\cdot (2,00) = \tau\cdot(0,00) = (0, 11)$, so $u_0=u_1 = 1$. 
This creates a contradiction because $\tau 
\mu_3\cdot(1,00) = \tau(1,00) = (1,00)$, 
which implies $ u_1 = u_2 = 0$.

Now assume $k>2$. If $n\neq 1\bmod k$, then $ F= \{0,1\}$. Then $\mu_s$ must interchange $0$ and $1$, so by Lemma~\ref{lem:ijReflec}, $s = k$.  Since $\alpha = \tau \mu_k$ preserves $R$, $\tau\cdot(0,y)=(0,x)$ and $\tau\cdot(1,x)=(1,y)$. 
Then $u_0=u_1=\cdots=u_k=1$. Since $k \in I$ and $\mu_k$ preserves $I$, $\mu_k(j) = k$ for some $j \in I\setminus \{0, 1\}$. Then $\alpha(j,x) = \tau\mu_s(j,x) = \tau\cdot (k,x)=(k,x)$, so $u_k=0$, a contradiction.

If instead $n=1\bmod k$, then $F=\{n-1,0,1\}$. 
Then $\mu_s$ fixes $0$ and interchanges $n-1$ and $1$, so $s=k-1$. Then $\tau\cdot(n-1,y)=(n-1,x)$ and $\tau\cdot(1,x)=(1,y)$. Hence $u_{n-1}=u_0=\cdots=u_k=1$. However, $\tau\cdot(0,x)=(0,x)$, so $u_0=u_1=\cdots=u_{k-1}=0$, a contradiction.

Thus $\delta\neq \mu_s$ for any $s \in \mathbb Z_n$, so $\delta=\text{id}$ and hence $\alpha=\tau \in K$.
For every $0 \le t \le \lceil{\frac{n}k}\rceil - 1$,
$\F_{tk}$ contains exactly one red vertex that is fixed by $\tau$, so by Lemma~\ref{lem:tauPreservesEven}, $u_{tk} = u_{tk+1} = \dots = u_{tk+k-1} = 0$. Hence $u_0 = \dots = u_{n-1} = 0$ and so $\tau = \text{id}$.
Thus, this is a $2$-distinguishing coloring of $\px(n,k)$ with a color class of size $\lceil\frac{n}{k}\rceil+1$, so $\dist(\px(n,k))=2$ and $\rho(\px(n,k))\leq \lceil\tfrac{n}{k}\rceil+1$.

To establish the lower bound on cost,
assume there exists a set of vertices $R=\{u_1,u_2,\dots,u_r\}$ with $r<\lceil\frac{n}{k}\rceil$ such that coloring the vertices of $R$ red and all other vertices blue defines a $2$-distinguishing coloring of $\px(n,k)$. 
If $\alpha\in\aut(\px(n,k))$ fixes every vertex in $R$, then certainly $\alpha$ preserves the color classes and so by assumption $\alpha = \text{id}$.
Hence, $R$ is a determining set of size $r<\lceil\frac{n}{k}\rceil$, a contradiction of Theorem~\ref{thm:DET}. Thus, $\rho(\px(n,k))\geq \lceil\tfrac{n}{k}\rceil$.
\end{proof}

\begin{figure}[h]
\includegraphics[scale=0.3, center]{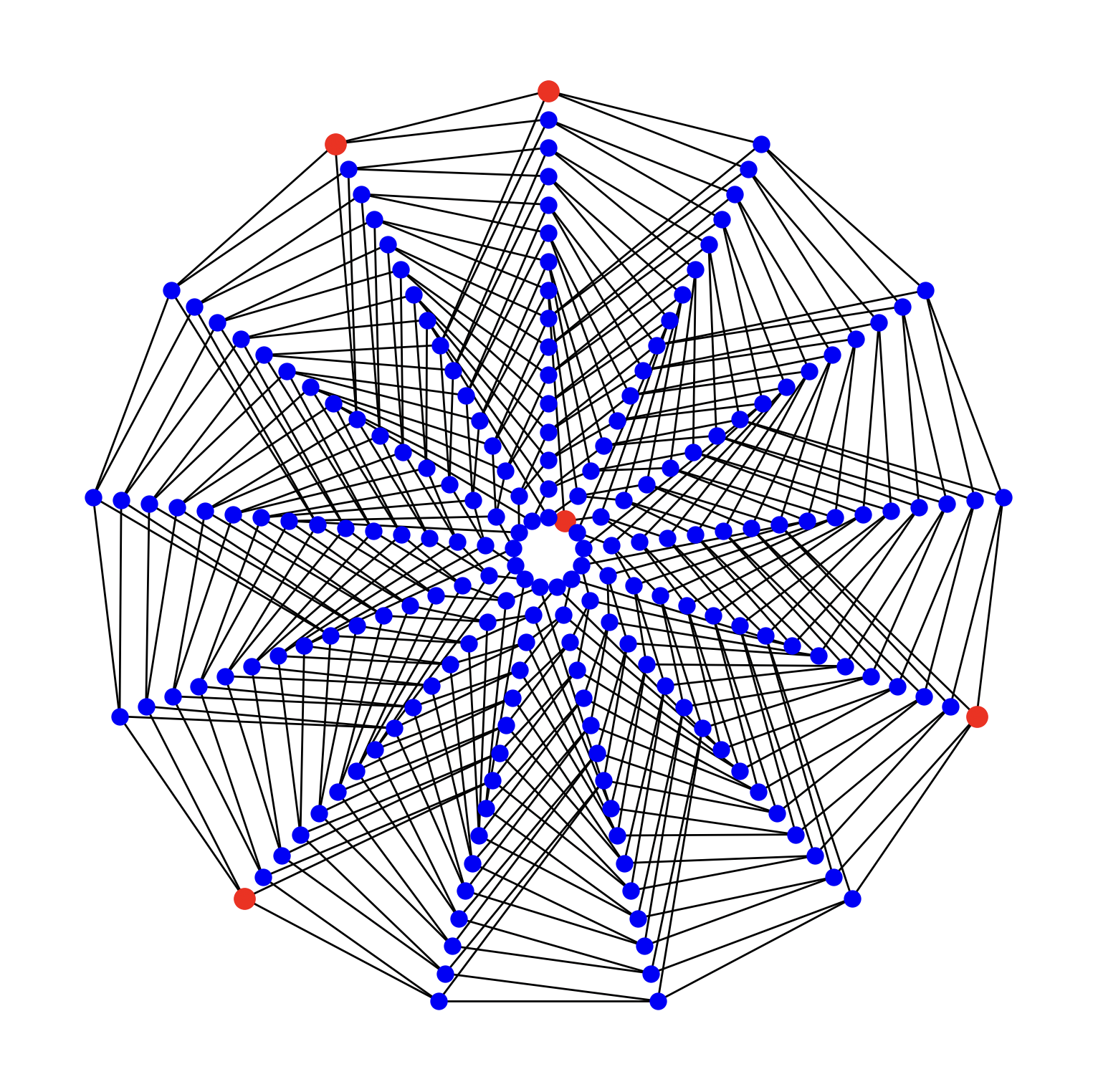}
\caption{$\px(13,4)$ with a $2$-distinguishing coloring of cost $5=\lceil\tfrac{13}{4}\rceil +1$.}
\label{fig:PX(13, 4)}
\end{figure}

\medskip

The remaining theorems indicate which Praeger-Xu graphs (for $n\ge 5$ and $k \ge 2$) have cost  $\lceil\frac{n}{k}\rceil$ and which have cost $\lceil\frac{n}{k}\rceil+1$.

\begin{thm}
Let $n \ge 5$ and $2\leq k<n$. 
If $k$ divides $n$, then $\rho(\px(n,k))=\lceil\frac{n}{k}\rceil+1=\frac{n}{k}+1$.
\end{thm}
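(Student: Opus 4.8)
The upper bound $\rho(\px(n,k)) \le \lceil\tfrac{n}{k}\rceil + 1$ is already supplied by Theorem~\ref{thm:2DistCostBounds}, and since $k \mid n$ we have $\lceil\tfrac{n}{k}\rceil = \tfrac{n}{k} =: m$. So the plan is to prove the matching lower bound $\rho(\px(n,k)) \ge m+1$, that is, to rule out any $2$-distinguishing coloring whose red class $R$ has exactly $m$ vertices. The key reduction is that every $2$-distinguishing color class is in particular a determining set: any nontrivial automorphism fixing $R$ pointwise would preserve the two color classes. Hence a size-$m$ distinguishing class would be a determining set of size $m$. When $k = \tfrac{n}{2}$ this is already impossible, since Theorem~\ref{thm:DET} gives $\det(\px(n,k)) = 3 = m+1 > m$; so I would assume from now on $k \ne \tfrac{n}{2}$, where $\det(\px(n,k)) = m$ and such a set would be a \emph{minimum} determining set.

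Next I would pin down the rigid structure forced on a minimum determining set $R$, reusing the gap analysis from the proof of Theorem~\ref{thm:DetnNot4}. Writing $I_R \subseteq \Z_n$ for the indices of the occupied fibres and $s = |I_R|$, each gap between consecutive occupied fibres must contain at most $k-1$ fibres (otherwise a single generator $\tau_t$ fixes $R$), which forces $n \le sk$, hence $s \ge m$. Since $|R| = m \ge s$, we get $s = m$: there is exactly one red vertex per occupied fibre, every gap equals $k-1$, and the occupied fibres are equally spaced $k$ apart. Because the cost is invariant under applying an automorphism to the color class, I may rotate so that $I_R = \{0, k, 2k, \dots, (m-1)k\}$ and write the red vertices as $(jk, x^{(j)})$ for $j \in \{0, \dots, m-1\}$ (with superscripts read modulo $m$).

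The crux is then to exhibit a nontrivial automorphism preserving $R$ setwise, contradicting distinguishability, and here $k \mid n$ does the real work. The index blocks $B_j = \{jk, jk+1, \dots, jk+k-1\}$ partition $\Z_n$, so the bits of each occupied fibre $\F_{(j+1)k}$ are governed by the disjoint block $B_{(j+1)\bmod m}$ of generators $\tau_{(j+1)k}, \dots, \tau_{(j+1)k+k-1}$. I would take $\alpha = \tau\rho^k$, where $\tau = \tau_0^{u_0}\cdots\tau_{n-1}^{u_{n-1}}$ is defined blockwise by setting $u_{(j+1)k + t} = 1$ exactly when $(x^{(j)})_t \ne (x^{(j+1)})_t$. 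Since $\rho^k$ fixes every bitstring and merely advances $\F_{jk}$ to $\F_{(j+1)k}$, the disjointness of the blocks lets $\tau$ independently correct the bitstring in each target fibre, so $\alpha$ sends the red vertex of $\F_{jk}$ to that of $\F_{(j+1)k}$ for every $j$; thus $\alpha(R) = R$. As $0 < k < n$, the induced fibre action $\rho^k$ is nontrivial, so $\alpha \ne \text{id}$, contradicting the assumption that $R$ is distinguishing. This yields $\rho(\px(n,k)) \ge m+1$, and with the upper bound, equality.

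I expect the main obstacle to be the structural step: one must argue that being a determining set of size exactly $m$ rigidly forces the equally-spaced, one-vertex-per-fibre configuration, and must be careful that the $\tau$-blocks are genuinely disjoint (which is precisely the divisibility hypothesis $k \mid n$) so that the bitstring corrections in different fibres never conflict and $\tau$ is well defined. The verification that $\alpha$ then permutes $R$ cyclically is a short, routine check.
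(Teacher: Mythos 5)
Your proposal is correct and takes essentially the same route as the paper's proof: the gap-counting argument that forces a size-$\frac{n}{k}$ red class to consist of one vertex in each of $\frac{n}{k}$ fibres spaced exactly $k$ apart, followed by the construction of the nontrivial color-preserving automorphism $\tau\rho^k$ with $\tau$ defined blockwise on the disjoint index blocks (which is exactly where $k \mid n$ is used), is precisely the paper's argument. The only cosmetic differences are that you dispatch $k = \frac{n}{2}$ early via $\det(\px(n,k)) = 3$ and phrase the structural step in terms of minimum determining sets, whereas the paper argues uniformly and directly with color-preserving automorphisms.
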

\begin{proof}
Let $R\subset V$ be any set of $\frac{n}{k}$ vertices. Color every  vertex in $R$ red and every other vertex blue. 
It suffices to show we can always find a nontrivial automorphism preserving $R$.

Let $I=\{i_1,i_2,\dots,i_r\} \subseteq \mathbb Z_n$ be the set of indices of fibres containing red vertices, where we assume that as integers, $0\le i_1 < i_2 < \dots < i_r <n$.  Then the gaps between these fibres 
contain $i_2-i_1-1,i_3-i_2-1,\dots, n+i_1-i_r-1$ fibres, respectively. If there exists $i_p\in I$ such that the gap between $i_p$ and $i_{p+1}$ contains at least $k$ fibres, then $\tau_{i_{p+1}-1}$ is a nontrivial automorphism that preserves colors. So assume that for all $i_p\in I$, the gap between $i_p$ and $i_{p+1}$ contains fewer than $k$ fibres.

Suppose there exists $i_p\in I$ such that the gap between $i_p$ and $i_{p+1}$ contains fewer than $k-1$ fibres. Since $r=|I|\leq |R|=\frac{n}{k}$, the total number of fibres is strictly less than $r+(k-1)r = kr \le k \cdot \frac{n}{k} = n$, a contradiction.
Thus for all $i\in\Z_n$, $i\in I$ if and only if $i+k\in I$, so the induced action of $\rho^k$ preserves $I$ as a subset of $V(C_n)$.

Since the fibres containing red vertices are separated by  $k-1$ fibres, 
we can define $\tau\in K$ such that $\tau$ adjusts the bitstring components of vertices in these fibres independently. More precisely, let $\tau=\tau_0^{u_0}\tau_1^{u_1}\cdots\tau_{n-1}^{u_{n-1}}$, where $u_m=1$ if and only if there exist $(i_p,x), (i_{p+1},y)\in R$ such that  $x_{m-i_p}\neq y_{m-i_{p+1}}$.
Then for all $(i_p,x)\in R$, 
$\tau\rho^k\cdot(i_p,x))=\tau\cdot(i_{p+1},x)=(i_{p+1},y)\in R.$
Thus, $\tau\rho^k$ is a nontrivial automorphism that preserves colors.
\end{proof}

\begin{thm} If $5 \le n < 2k$, then $\rho(\px(n,k)) = \lceil \frac{n}{k} \rceil = 2$ .
\end{thm}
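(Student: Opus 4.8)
The plan is to prove the equality by establishing only the upper bound $\rho(\px(n,k)) \le 2$, since Theorem~\ref{thm:2DistCostBounds} already supplies the matching lower bound $\rho(\px(n,k)) \ge \lceil \tfrac{n}{k}\rceil$. The hypothesis $n < 2k$ together with the standing assumption $k < n$ forces $\tfrac{n}{2} < k < n$, so that $\lceil \tfrac{n}{k}\rceil = 2$ and in particular $k \neq \tfrac{n}{2}$. Thus it suffices to exhibit a $2$-distinguishing coloring whose smaller color class has exactly two vertices; since $n \ge 5$ also guarantees $\mathcal A = \aut(\px(n,k))$, every automorphism is of the form $\tau\delta \in \mathcal A$.

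The guiding observation is that a two-element red set $R = \{u,v\}$ gives a distinguishing coloring precisely when two conditions hold simultaneously: first, $\{u,v\}$ must be a determining set, so no nontrivial automorphism fixes both $u$ and $v$; and second, $u$ and $v$ must not be interchangeable, so no automorphism swaps them. Indeed, any color-preserving automorphism permutes $R$, hence either fixes both vertices or transposes them, so ruling out both possibilities forces the identity. Note this already tells me $u$ and $v$ cannot share a fibre, since Lemma~\ref{lem:interchangeable}(1) makes any two vertices of one fibre interchangeable.

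Concretely, I would take $R = \{(0,00\cdots 0),\,(k,10\cdots 0)\}$, whose vertices lie in the non-antipodal fibres $\F_0$ and $\F_k$ (non-antipodal because $k \neq \tfrac{n}{2}$). To verify $R$ is a determining set I would reuse the argument in the proof of Theorem~\ref{thm:DetnNot4}: any $\alpha = \tau\delta$ fixing both vertices induces a dihedral element fixing the non-antipodal indices $0$ and $k$ of $C_n$, forcing $\delta = \text{id}$, after which Lemma~\ref{lem:tauPreservesEven} applied to the single fixed vertex in each of $\F_0$ and $\F_k$ gives $u_0 = \dots = u_{k-1} = 0$ and $u_k = \dots = u_{2k-1} = 0$ (subscripts mod $n$). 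For non-interchangeability I would invoke Corollary~\ref{cor:zInterchangeable} with $j = k$: conditions (1) and (3) fail at once since $k \neq 0$ and $k \neq \tfrac{n}{2}$, while condition (2) fails because $0 \in M$ (as $k \le n \le 2k-1$) and at $m = 0$ it would demand $y_0 = y_{k-1}$, which is false for $y = 10\cdots 0$ since $y_0 = 1 \neq 0 = y_{k-1}$ when $k \ge 2$.

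The step I expect to be the crux is handling the index wraparound caused by $2k > n$. I must confirm that the two blocks $\{0,\dots,k-1\}$ and $\{k,\dots,2k-1\}\bmod n$ still cover all of $\mathbb{Z}_n$ so that $\tau = \text{id}$ (they do, since together they contain $\{0,\dots,k-1\}\cup\{k,\dots,n-1\}$). Tellingly, this same wraparound is exactly what places $0$ in $M$ and drives the failure of the interchangeability condition, so it is the arithmetic of $\tfrac{n}{2} < k < n$ that lets a single two-element set play both required roles at once, yielding $\rho(\px(n,k)) \le 2$ and hence equality.
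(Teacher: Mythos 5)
Your proof is correct, and it follows the paper's overall strategy: take the lower bound $\rho(\px(n,k))\ge\lceil\tfrac{n}{k}\rceil=2$ from Theorem~\ref{thm:2DistCostBounds}, then exhibit a two-element red set $R=\{z,v\}$ that is simultaneously a determining set and non-interchangeable, so any color-preserving automorphism must fix both red vertices and hence be the identity. The genuine difference is the choice of $v$, and it matters. The paper takes $v=(j,011\cdots 1)$ with $j=\lfloor n/2\rfloor-1<k-1$, and justifies the determining-set step only by citing Theorem~\ref{thm:DetnNot4} for ``two vertices in non-antipodal fibres''; but that theorem's proof certifies a pair only when the index blocks $\{i_1,\dots,i_1+k-1\}$ and $\{i_2,\dots,i_2+k-1\}$ cover all of $\Z_n$. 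For odd $n$ with $k=(n+1)/2$ (e.g.\ $\px(5,3)$, where $j=1$), the paper's blocks $\{0,\dots,k-1\}\cup\{j,\dots,j+k-1\}$ miss the index $n-1$, and indeed $\tau_{n-1}$ acts trivially on $\F_0$ and $\F_j$, so it is a nontrivial automorphism preserving the paper's coloring: the paper's witness actually fails in those cases. Your choice $v=(k,10\cdots 0)$ avoids this entirely: it reproduces exactly the determining set constructed in the proof of Theorem~\ref{thm:DetnNot4} (the blocks $\{0,\dots,k-1\}$ and $\{k,\dots,2k-1\}\bmod n$ cover $\Z_n$ precisely because of the wraparound $n<2k$), and the same wraparound places $0\in M$, where condition (2) of Corollary~\ref{cor:zInterchangeable} fails at $m=0$ since $y_0=1\neq 0=y_{k-1}$. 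So your argument is not merely a correct variant of the paper's proof; it repairs a real gap in it.
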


\begin{proof}
Let $j =  \lfloor \tfrac{n}{2} \rfloor - 1$. Then  $5 \le n < 2k$ implies $0 < j < k-1$. 
Next, let $R = \{z, v\}$ where $z = (0, 000\cdots 0)$ and $v = (j,y) = (j, 011\cdots 1))$.
 Color vertices in $R$ red and all other vertices blue; assume $\alpha \in \aut(\px(n,k))$ preserves these color classes. 
 Let $M = \big\{0,1,\dots,k-1\big\}\cap\big\{j,j+1,\dots,j+k-1\big\}.$
 Since $0< j < k-1$,  $j \in M$.
 For $m = j$, $y_{m-j} = y_0 = 0$, but $y_{k-1-m} = y_{k-1-j} = 1.$ By Corollary~\ref{cor:zInterchangeable}, $z$ and $v$ are not interchangeable, so $\alpha$ can only preserve $R$ by fixing $z$ and $v$.
 Because fibres $\F_0$ and $\F_j$ are not antipodal, $R$ is a determining set by Theorem~\ref{thm:DetnNot4}.  By definition, $\alpha$ is the identity. Thus we have defined a $2$-distinguishing coloring in which the smaller color class has size $2$.
 \end{proof}

\begin{thm}
Let $k\geq 2$ and $n>2k$ such that $k$ does not divide $n$. Then 
$$\rho(\px(n,k)) = 
\begin{cases} 
\lceil\tfrac{n}{k}\rceil+1, &\text{if } 
n = -1 \bmod k,\\
\lceil\tfrac{n}{k}\rceil, \quad &\text{if } 
n \neq -1 \bmod k. 
\end{cases}$$
\end{thm}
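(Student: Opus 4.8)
The plan is to pin down which endpoint of the interval $\lceil n/k\rceil \le \rho(\px(n,k)) \le \lceil n/k\rceil + 1$ from Theorem~\ref{thm:2DistCostBounds} is attained, so that in each case only one of the two inequalities remains to be proved. Write $r = \lceil n/k\rceil$ and $n = (r-1)k + s$ with $1 \le s \le k-1$; then $n \equiv -1 \bmod k$ is exactly the case $s = k-1$, and the two branches of the statement correspond to slack $k - s = 1$ versus slack $k - s \ge 2$. In both regimes the relevant configurations have one red vertex per fibre, and the decisive quantity turns out to be the overlap between the $\tau$-influence sets $\{i,\dots,i+k-1\}$ of the two red fibres bounding the single unavoidable short gap: overlap $1$ when $s = k-1$, and overlap $k-s \ge 2$ when $s \le k-2$. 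Since $n > 2k \ge 4$, we have $n \neq 4$, so $\mathcal A = \aut(\px(n,k))$ throughout and there are no exceptional automorphisms to consider.

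For the case $n \not\equiv -1 \bmod k$ I would exhibit an explicit $2$-distinguishing coloring of cost $r$, matching the lower bound of Theorem~\ref{thm:2DistCostBounds}. Place one red vertex in each fibre $\F_{jk}$, $j = 0, \dots, r-1$, so the index set is $I = \{0, k, \dots, (r-1)k\}$. All gaps are at most $k-1$ (the wrap-around gap has length $s-1$), so no single $\tau_m$ preserves the coloring; moreover the influence sets cover $\Z_n$, so once the induced fibre action is trivial, Lemma~\ref{lem:tauPreservesEven} applied to each singleton red fibre forces $\tau = \text{id}$. Because $I$ has a unique short gap it admits no nontrivial rotational symmetry and exactly one reflection $\sigma$, which swaps $\F_0 \leftrightarrow \F_{n-s}$. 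The only nontrivial $\delta$ to rule out is therefore this reflection, and it suffices to choose the bitstrings on $\F_0$ and $\F_{n-s}$ so that this pair fails condition (2) of Lemma~\ref{lem:interchangeable}; since the relevant overlap has size $k - s \ge 2$, there is room to do so (for instance, $00\cdots0$ on $\F_0$ and $00\cdots01$ on $\F_{n-s}$, with every other red vertex $00\cdots0$). Then no reflection-type automorphism can interchange this pair, so no color-preserving $\tau\mu_s$ exists and the coloring is $2$-distinguishing.

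For the case $n \equiv -1 \bmod k$ I would instead show the lower bound cannot be improved: every coloring with $r$ red vertices admits a nontrivial color-preserving automorphism, forcing $\rho \ge r+1$ and hence equality with the upper bound of Theorem~\ref{thm:2DistCostBounds}. Fix such a coloring. If two red vertices share a fibre, then there are at most $r-1$ red fibres, and a counting argument using $n = rk-1$ shows some gap has length at least $k$, so the corresponding $\tau_m$ preserves the coloring. Otherwise there are $r$ distinct red fibres; since the total gap length is $rk - 1 - r = r(k-1) - 1$ with each gap at most $k-1$, the gaps are forced to be $k-1$ except for a single gap of length $k-2$, a ``necklace'' with a unique reflection axis $\sigma$. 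I would then realize $\sigma$ as a color-preserving automorphism $\tau\mu_s$ by correcting bitstrings fibre by fibre: regular $\sigma$-paired fibres (difference $k$) have disjoint influence sets and so can be corrected independently; any $\sigma$-fixed fibre is corrected within itself; and at the short gap the two bounding fibres (difference $k-1$) share exactly one influence index $m$. The crux is to verify that the two correction requirements on this shared $u_m$ coincide: a direct computation shows both reduce to the single condition $(x^{(a)})_{k-1} \neq (x^{(b)})_0$, so $\tau$ is always well defined. Hence the reflection preserves reds for \emph{every} bitstring assignment, and no $r$-vertex coloring distinguishes.

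The unifying theme, and the cleanest way to organize the write-up, is this contrast at the forced short gap: overlap $1$ (the case $n \equiv -1$) leaves a single shared coordinate whose two constraints are automatically equal, making the reflection unbreakable, whereas overlap $k - s \ge 2$ (the remaining case) provides enough shared coordinates to engineer a genuine conflict and kill the reflection. I expect the short-gap coincidence computation in the case $n \equiv -1 \bmod k$ to be the most delicate step, together with the bookkeeping needed to confirm that $I = \{0, k, \dots, (r-1)k\}$ genuinely has no symmetries other than $\sigma$ in the construction case.
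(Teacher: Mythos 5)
Your proposal is correct and takes essentially the same route as the paper: for $n \neq -1 \bmod k$ the same coloring (all-zero bitstrings on fibres $0,k,\dots,(r-2)k$ and $00\cdots01$ on $\F_{(r-1)k}$) with rotations killed by the unique short gap and the unique reflection killed by a bit conflict, and for $n = -1 \bmod k$ the same gap-counting that forces one red vertex per fibre with a single short gap, followed by dressing the forced reflection with a $\tau$ whose lone shared index imposes the self-consistent condition $x_0 \neq y_{k-1}$. The only cosmetic differences are that you rule out the reflection via Lemma~\ref{lem:interchangeable} where the paper derives the contradiction on the $u_m$ directly, and that you explicitly treat a possible reflection-fixed middle fibre, a case the paper passes over silently.
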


\begin{proof}
First assume $n= -1\bmod k$, so $n = \lceil \frac{n}{k} \rceil k - 1$. 
Let $R$ be any set of $\lceil\frac{n}{k}\rceil$ vertices. Color every vertex in $R$ red and every other vertex blue.
Let $I=\{i_1,i_2,\dots,i_r\} \subset \mathbb Z_n$ be the set of indices of the fibres containing red vertices, where as integers, $0\leq i_1 < i_2<\cdots  <i_r<n$. We will show that there is a nontrivial automorphism preserving $R$.

If there exists $i_p\in I$ such that the gap between $i_p$ and $i_{p+1}$ contains at least $k$ fibres, then $\tau_{i_p+k}$ is a nontrivial automorphism that preserves colors. So assume that every gap has at most $k-1$ fibres. Suppose there exist at least two gaps that contain at most $k-2$ fibres. 
Then the total number of fibres is at most 
\[
r+2(k-2)+(r-2)(k-1)= rk-2 \leq \lceil\tfrac{n}{k}\rceil k -2 <\lceil \tfrac{n}{k} \rceil k - 1 = n,
\]
a contradiction. Thus at most one gap contains at most $k-2$ fibres and the others contain exactly $k-1$ fibres. 
If two vertices $u,v\in R$ are in the same fibre, then  $r<\lceil\frac{n}{k}\rceil$ and  then the total number of fibres is 
\[
r+(r-1)(k-1)+k-2 = rk-1< \lceil\tfrac{n}{k}\rceil k -1 =n,
\]
a contradiction. 
Thus  $ r=\lceil\frac{n}{k}\rceil$, every gap 
except one contains $k-1$ fibres, and the remaining gap contains $k-2$ fibres.  
By vertex-transitivity, we can assume  $I=\{0,k,2k,\dots,(\lceil\tfrac{n}{k}\rceil-1)k\}$.

Let $j=(\lceil\tfrac{n}{k}\rceil-1)k= n-(k-1)$.  The gap between $\F_j$ and $\F_0$ is the one containing exactly $k-2$ fibres; all other gaps contain $k-1$ fibres. Let $u=(0,x),v=(j,y)\in R$ be the red vertices in $\F_0$ and $\F_j$, respectively.
Then, as defined in Lemma~\ref{lem:interchangeable}, let
\[
M =  \big\{0,1,\dots,k-1\big\}\cap\big\{j, j+1, \dots, j+k-1\}
= \big\{0\big\}.
\]
For the only $m\in M$, 
$m-j=k-1$
and 
$m-i=0.$
Then $(x^-)_{m-j}=x_0$, $y_{m-j}=y_{k-1}$, $(y^-)_{m-i}=(y^-)_{0}=y_{k-1}$, and $x_{m-i}=x_0$. Of course, $x_0=y_{k-1}$ if and only if $y_{k-1}=x_0$. By Lemma~\ref{lem:interchangeable} $u$ and $v$ are interchangeable by an automorphism  of the form by $\alpha=\tau\mu_s\in\mathcal{A}$, where 
$s=n=0 \bmod n$
and 
$\tau=\tau_0^{u_0}\tau_1^{u_1}\cdots\tau_{n-1}^{u_{n-1}}$ is designed to flip exactly the right bits of the bitstring components of vertices in $\F_0$ and $\F_j$.
More precisely, let $t \in \{0, 1, \dots, k-1\}$. If $y_t \neq (x^-)_t = x_{k-1-t}$, then $u_{j+t}=u_{k-1-t} = 1$; 
if $y_t = (x^-)_t = x_{k-1-t}$, then $u_{j+t}=u_{k-1-t} = 0$.
Note that this prescribes the value of $u_m$ for all $m \in \{0, 1, \dots k-1\} \cup \{j, j+1, \dots, j+k-1\}$; vertices in $\F_0$ and $\F_j$ are unaffected by the value $u_\ell$ for any $\ell\in \{k, k+1, \dots, j-1\}$.

We claim that we can set the value of $u_\ell$ for all $\ell\in \{k, k+1, \dots, j-1\}$ in such a way that $\tau\mu_0$ preserves $R$. Note that 
\[
\{k, k+1, \dots, j-1\} = \bigsqcup_{a=1}^ {\lceil\frac{n}{k}\rceil-2} \{ak, ak+1, ak+2, \dots, ak+k-1\}.
\]
Let $b \in \{1, \dots,\lceil\frac{n}{k}\rceil-2 \}$ and let $(bk, w)$ be the red vertex in $\F_{bk}$. 
Then $\tau\mu_n \cdot (bk, w) =  \tau\cdot (ak, w^-)$, for some $a \in \{1, \dots,\lceil\frac{n}{k}\rceil-2 \} \setminus \{b\}$.  
We can arrange to have $\tau(ak, w^-)$ equal the red vertex in $\F_{ak}$ by flipping bits in $w^-$ as necessary; this can be achieved by appropriately setting the values of $u_{ak}, u_{ak+1}, \dots , u_{ak+k-1}$. These values won't affect vertices in any of the other fibres containing red vertices.
 
Now assume $n\neq -1\bmod k$; because we are also assuming that $k \nmid n$, the remainder after $n$ is divided by $k$ satisfies  
$0<n-(\lceil\frac{n}{k}\rceil-1)k< k-1$.
Again, let $I=\{0,k,2k,\dots,(\lceil\frac{n}{k}\rceil-1)k\}$. To simplify notation, again let $j =(\lceil\frac{n}{k}\rceil-1)k$. Then let
\[
R = \{(i, 00\cdots 00) : i\in I \setminus \{j\}\} \cup \{(j, 00\cdots 01)\}.
\]
Note that $|R| = |I| = \lceil\frac{n}{k}\rceil$. Color every vertex in $R$ red and all other vertices blue.
Let $\alpha=\tau\delta\in\mathcal{A}=\aut(\px(n,k))$ such that $\alpha$ preserves these color classes.
The induced action of $\alpha$ on the fibres must preserve the set $I$, interpreted as a subset of $V(C_n)$. The distance between $0$ and $j=(\lceil\frac{n}{k}\rceil-1)k$ in $C_n$ is strictly less than $k+1$, whereas the distance between any other two consecutive elements of $I$ in $C_n$ is exactly $k+1$. So no nontrivial rotation preserves $I$.

Thus $\delta=\mu_s$, where $\mu_s$ interchanges $0$ and $j$. 
Then $\tau\mu_s$ interchanges the red vertices in $\F_0$ and $\F_j$, so $\tau\delta\cdot(0,00\cdots0)=\tau (j,00\cdots0) = (j,00\cdots01)$, which implies $u_j = u_{j+1} = \dots =u_{j+k-2} = 0$ and $u_{j+k-1} =1$. 
Additionally, $\tau\delta\cdot(j,00\cdots01)=\tau \cdot(0,10\cdots00) = (0, 00\cdots00)$, which implies $u_0=1$ and $u_2 = u_3 = \dots = u_{k-1} = 0$. 
A contradiction arises because $0<n-j< k-1$ implies that in $\mathbb Z_n$, $0 =n = j+m$ for some $m \in \{1, 2, \dots, k-2\}$.
Thus, $\delta=\text{id}$ and so $\alpha = \tau=\tau_0^{u_0}\tau_1^{u_1}\cdots\tau_{n-1}^{u_{n-1}} \in K$.

For every $0 \le t \le \lceil{\frac{n}k}\rceil - 1$,
$\F_{tk}$ contains exactly one red vertex that is fixed by $\tau$, so by Lemma~\ref{lem:tauPreservesEven}, $u_{tk} = u_{tk+1} = \dots = u_{tk+k-1} = 0$. Hence $u_0 = \dots = u_{n-1} = 0$ and so $\tau = \text{id}$. Thus, this is a $2$-distinguishing coloring of $\px(n,k)$ of with a color class of size $\lceil\frac{n}{k}\rceil$. By Theorem~\ref{thm:2DistCostBounds}, $\rho(\px(n,k))=\lceil\frac{n}{k}\rceil$.
\end{proof}

Our results on distinguishing number and cost are summarized below.

\begin{thm}
Let $n\geq3$ and $2\leq k<n$. Then $\dist(\px(n,k))=2$ and 
$$
\rho(\px(n,k))=
\begin{cases}
5, \, & \text{ if } (n,k) = (4,2),\\
\lceil\tfrac{n}{k}\rceil, &\text{ if } 5\leq n<2k \text{ or }\\
& \text{ if } 2k<n \text{ and } n \notin \{0\bmod k,-1\bmod{k}\},\\
\lceil\tfrac{n}{k}\rceil +1, &\text{ otherwise.}
\end{cases}
$$
\end{thm}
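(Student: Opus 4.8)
The plan is to treat this theorem as a consolidation of the results established earlier in Sections~\ref{sec:PXn1}--\ref{sec:DistNoTwins}, so the proof is essentially a bookkeeping argument verifying that the stated piecewise formula correctly assembles the individual cases already handled. First I would dispose of the distinguishing number. Every pair $(n,k)$ with $n \ge 3$ and $2 \le k < n$ falls into exactly one previously treated range: the only such pairs with $n \in \{3,4\}$ are $(3,2)$, $(4,2)$, and $(4,3)$, covered respectively by Theorem~\ref{thm:dist3,2}, Proposition~\ref{prop:PX42}, and Theorem~\ref{thm:dist43}, while every graph with $n \ge 5$ is covered by Theorem~\ref{thm:2DistCostBounds}. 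In each of these the established value is $\dist(\px(n,k)) = 2$, so the first assertion follows immediately.

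For the cost I would carry out a case split on $(n,k)$ and, for each case, cite the theorem that computes $\rho$, then check that its value matches the branch of the piecewise formula into which $(n,k)$ falls. The exceptional value $\rho(\px(4,2)) = 5$ is exactly Proposition~\ref{prop:PX42}. The two remaining small graphs, $\px(3,2)$ and $\px(4,3)$, have $\rho = \lceil n/k \rceil + 1 = 3$ by Theorems~\ref{thm:dist3,2} and~\ref{thm:dist43}; since neither satisfies $5 \le n < 2k$ nor $2k < n$, both land in the final (``otherwise'') branch, where the formula likewise returns $\lceil n/k \rceil + 1$.

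For $n \ge 5$ the three theorems immediately preceding this one partition the remaining parameter space. The range $5 \le n < 2k$ (which forces $k \ge 3$ and $\lceil n/k \rceil = 2$) gives $\rho = \lceil n/k \rceil$, matching the first $\lceil n/k \rceil$ branch. When $2k < n$ and $k \nmid n$, the last theorem yields $\rho = \lceil n/k \rceil$ precisely when $n \not\equiv -1 \pmod{k}$ and $\rho = \lceil n/k \rceil + 1$ when $n \equiv -1 \pmod{k}$, aligning with the second $\lceil n/k \rceil$ branch and part of the ``otherwise'' branch respectively. The divisible cases---$n = 2k$ (which forces $k \mid n$, hence is neither $5 \le n < 2k$ nor $2k < n$) together with $2k < n$ and $k \mid n$---are both covered by the theorem treating $k \mid n$, giving $\rho = \lceil n/k \rceil + 1$ and completing the ``otherwise'' branch.

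The only real obstacle is confirming that the inequalities are strict in the right places, so that the cases are mutually exclusive and exhaustive: in particular, that $n = 2k$ is excluded from both $5 \le n < 2k$ and $2k < n$ and is therefore correctly swept into the $k \mid n$ analysis, and that the residue conditions $n \equiv 0$ and $n \equiv -1 \pmod{k}$ exactly carve out the ``otherwise'' portion of the $2k < n$ regime. Once this partition is verified, no further computation is needed and the formula follows.
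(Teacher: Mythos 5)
Your proposal is correct and matches the paper's approach exactly: the paper offers this theorem as a summary of the preceding results (Theorem~\ref{thm:dist3,2}, Proposition~\ref{prop:PX42}, Theorem~\ref{thm:dist43}, Theorem~\ref{thm:2DistCostBounds}, and the three cost theorems for $n\ge 5$), with no further argument, so the proof is precisely the case-by-case consolidation you describe. Your verification that the cases are mutually exclusive and exhaustive (in particular that $n=2k$ and the small graphs $\px(3,2)$, $\px(4,3)$ fall into the ``otherwise'' branch) is the only substantive content needed, and it is done correctly.
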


\bibliographystyle{abbrv}
\bibliography{PXBibliography}

\end{document}